\DeclareMathOperator*{\argmax}{arg\,max}
\DeclareMathOperator*{\argmin}{arg\,min}
\newcommand{\Q}{\mathcal{Q}}
\renewcommand{\H}{\mathcal{H}}
\newcommand{\R}{\mathbb{R}}
\newcommand{\G}{\mathcal{G}}
\newcommand{\Proj}{\operatorname{proj}}
\newcommand{\ran}{\operatorname{ran}}
\newcommand{\dom}{\operatorname{dom}}
\newcommand{\Id}{\operatorname{Id}}
\newcommand{\N}{\mathbb{N}}
\newtheorem{proposition}{Proposition}[section]
\theoremstyle{remark}
\newtheorem{remark}{Remark}[section]
\theoremstyle{definition}
\title{Splitting Algorithms for Distributionally Robust Optimization}
\author{Luis Briceño-Arias\footnote{Departamento de Matemática, Universidad tecnica federico santa maria, Santiago, Chile. E-mail: \href{mailto:luis.briceno@usm.cl}{luis.briceno@usm.cl}},
Sergio López-Rivera\footnote{Departamento de Ingeniería Matemática, Universidad de Chile, Santiago, Chile.
		E-mail: \href{mailto:sergio.lopez@dim.uchile.cl}{sergio.lopez@dim.uchile.cl}} and 
  Emilio Vilches\footnote{Instituto de Ciencias de la Ingeniería, Universidad de O’Higgins, Rancagua, Chile.
		E-mail: \href{mailto:emilio.vilches@uoh.cl}{emilio.vilches@uoh.cl}}}
\date{\today}
\newcommand{\otherlabel}[2]{\protected@edef\@currentlabel{#2}\label{#1}}
\begin{document}

 \maketitle
 \begin{abstract}
In this paper, we provide different splitting methods for solving distributionally robust optimization problems in cases where the uncertainties are described by discrete distributions. The first method 
 involves computing the proximity operator of the supremum function that appears in the optimization problem. The second method solves an equivalent monotone inclusion formulation derived from the first-order optimality conditions, where the resolvents of the monotone operators involved in the inclusion are computable. The proposed methods are applied to solve the Couette inverse problem with uncertainty and the denoising problem with uncertainty. We present numerical results to compare the efficiency of the algorithms. 
 
 \end{abstract}
 \textbf{Keywords} Distributionally Robust Optimization, Supremum Function, Proximal Mapping, Splitting Algorithms.
	\section{Introduction}
Stochastic optimization is a method for solving decision problems that involve uncertainty or randomness. The goal is to find solutions that work well under different conditions by optimizing an objective function that accounts for the uncertainty, often focusing on expected results or reducing risks (see, e.g., \cite{MR4362585}). However, this approach usually assumes full knowledge of the probability distributions, which may not always be practical in real situations. To address this situation, the theoretical framework of Distributionally Robust Optimization (DRO) has been introduced, which assumes that the true probability distribution belongs to a set of distributions known as the \emph{ambiguity set}. Optimization is then performed based on the worst-case scenario within this ambiguity set. By considering different ambiguity sets, DRO encompasses Robust Optimization when the ambiguity set includes all possible distributions, and Stochastic Optimization when the ambiguity set consists of a single distribution. Hence, the DRO framework is well-suited for addressing problems with partial information, promoting distributed robustness in decision-making. We refer to \cite{MR4362585,sun2021robust,wiesemann2014distributionally} for more details. 

   Let $(\Omega, \mathcal{A})$ be a measurable space, and let $\mathcal{P}$ be a nonempty, closed and convex subset of probability measures defined on $(\Omega, \mathcal{A})$, supported on $\Xi \subset \mathcal{G}$, where $\mathcal{G}$ is a Hilbert space. In this paper, we aim to study splitting methods for solving the following distributionally robust optimization problem:
	\begin{align}
		\label{dro_ver_cont}
		\min_{x\in \Q} \left\{h(x)+\sup_{\mathbb{P} \in \mathcal{P}}\mathbb{E}_{{\mathbb{P}}} \left[F(x,\xi)\right]\right\},
	\end{align}
	where $\Q$ is a nonempty convex closed subset of a real Hilbert space $\H$ and $h\colon\H\rightarrow\R$ is a convex and differentiable function with $\beta^{-1}$-Lipschitz gradient for some $\beta>0$. Besides, $\xi\colon \Omega \to \Xi$ is a random vector, and $F\colon \mathcal{H} \times \Xi \to \mathbb{R}$ is the random cost function, with $F(\cdot, \xi)$ being proper, lower semicontinuous, and convex for every $\xi \in \Xi$. 
The set $\mathcal{P}$ is called the \emph{ambiguity set} and accounts for the level of knowledge about the probability model of the problem. This set can be constructed based on empirical statistical information or beliefs about the moments of the distribution itself (see \cite{sun2021robust}). Moreover, when the ambiguity set $\mathcal{P}$ is a singleton, the DRO model \eqref{dro_ver_cont} reduces to a stochastic optimization problem. Conversely, if $\mathcal{P}$ represents the set of all probability measures defined on $(\Omega,\mathcal{A})$ and supported on $\Xi$, then problem~\eqref{dro_ver_cont} becomes a robust optimization problem. Thus, the DRO model generalizes both robust and stochastic optimization, offering a unified framework where robust and stochastic optimization are two extreme cases. 

It is well-known that both robust and stochastic optimization have certain limitations. First, robust optimization can be very conservative, as it ignores valuable probabilistic information, while stochastic optimization may require too much information about the probability distribution, which may not be available to the modeler. Second, the solutions provided by robust or stochastic optimization models may perform poorly in out-of-sample tests or may have intrinsic bias that cannot be eliminated simply by increasing the size of the sampled data. Third, robust models can be computationally difficult to solve. Additionally, stochastic programs may involve high-dimensional integration, which is also generally intractable. In this context, the DRO problem provides a potent modeling framework and addresses some of the disadvantages of both stochastic and robust optimization. We refer to \cite{MR4362585} for a thorough discussion.

 One way to tackle the problem~\eqref{dro_ver_cont} is to use the techniques proposed in \cite[Section~4]{sun2021robust} or in \cite[Section~7]{MR4362585}, that is, formulate the dual program of the inner maximization problem:
 \begin{align}
\label{primal_ex_1}
\max_{\mathbb{P}\in\mathcal{P}}\mathbb{E}_{\mathbb{P}}[\overline{F}(x,\xi)]:=\mathbb{E}_{\mathbb{P}}[h(x)+F(x,\xi)].
 \end{align}
Let us illustrate this technique considering $\G=\R^{m}$ and the following ambiguity set \cite[eq. 4.2.1]{sun2021robust}
 \begin{align}
\label{amb_set_1}
\mathcal{P}(\Xi, \bm{\mu}^{+}, \bm{\mu}^{-})=\{\mathbb{P}\in\mathcal{M}_{+}(\mathcal{A})\,:\,\mathbb{P}(\xi\in\Xi)=1,\,\,\bm{\mu}^{-}\leq\mathbb{E}_{\mathbb{P}}[\xi]\leq\bm{\mu}^{+}\},
\end{align}
where $\mathcal{M}_{+}(\mathcal{A})$ is the set of all the measures defined on $\mathcal{A}$, $\bm{\mu}^{+}\in\R^{m}$, and $\bm{\mu}^{-}\in\R^{m}$. Denote by $\mathcal{M}_{+}(\R^{m})$ the set of all the measures induced by the random vector $\xi$. That is, $\mu\in\mathcal{M}_{+}(\R^{m})$ if $\mu(A)=\mathbb{P}(\xi^{-1}(A))$, for all $A\subset\R^{m}$ Borel measurable set, where $\mathbb{P}\in\mathcal{M}_{+}(\mathcal{A})$. Then, the problem \eqref{primal_ex_1} is equivalent to the following maximization problem
\begin{align}
\label{primal_ex_2}
\max_{\mu\in\mathcal{M}_{+}(\R^{m})}\,\, &\int_{\Xi}\overline{F}(x,\xi)d\mu(\xi)\\
\text{s.t. }&\int_{\Xi}d\mu(\xi)=1,\quad\int_{\Xi}\xi d\mu(\xi)\leq \bm{\mu}^{+},\quad\int_{\Xi}-\xi d\mu(\xi)\leq -\bm{\mu}^{-},\nonumber
\end{align}
whose dual problem \eqref{primal_ex_2} is given by
\begin{align}
&\min_{\lambda\in\R,\bm{\beta}\geq 0,\bm{\gamma}\geq 0}\lambda+\bm{\beta}^{\top}\bm{\mu}^{+}-\bm{\gamma}^{\top}\bm{\mu}^{-}\label{dual_ex_1}\\
&\qquad\text{s.t. }\overline{F}(x,\xi)-\lambda+(\bm{\gamma}-\bm{\beta})^{\top}\xi\leq 0,\quad\forall \xi\in\Xi.\nonumber
\end{align}
If the strong duality holds between the primal problem~\eqref{primal_ex_2} and its dual \eqref{dual_ex_1} (for example, if the Slater conditions hold for \eqref{primal_ex_2}), we can replace the inner maximization problem with its dual in the original DRO problem~\eqref{dro_ver_cont}. Thus, the problem \eqref{dro_ver_cont} is equivalent to the following static robust optimization problem
\begin{align}
\label{equiv_dro_1}
\min_{x\in\H, \lambda\in\R, \bm{\beta}\geq 0, \bm{\gamma}\geq 0}&\,\, \lambda+\bm{\beta}^{\top}\bm{\mu}^{+}-\bm{\gamma}^{\top}\bm{\mu}^{-}\\
\text{s.t. }&\overline{F}(x,\xi)-\lambda+(\bm{\gamma}-\bm{\beta})^{\top}\xi\leq 0,\quad\forall \xi\in\Xi,\nonumber\\
&x\in \Q.\nonumber
\end{align}
The difficulty of the program~\eqref{equiv_dro_1} depends on the structure of the function $\overline{F}(x,\xi)$ as well as of the set $\Q$. We focus on the discrete case, that is, when the set $\Xi$ associated to $\xi$ is finite: $\Xi=\{\xi_{1},\ldots,\xi_{N}\}\subset \G$ with $N\geq 1$. For every $\mathbb{P}\in\mathcal{P}$ and $i\in\{1,\ldots,N\}$, we denote $p_{i}:=\mathbb{P}(\{\omega\in\Omega\,:\,\xi(\omega)=\xi_{i}\})$. Then, the set of all the probability measures is given by the following set (the probability simplex)
	\begin{align*}
		\Delta_{N}:=\left\{p\in\R_{+}^{N}\,:\,\sum_{i=1}^{N}p_{i}=1 \right\}.
	\end{align*}
 and $\mathbb{E}_{\mathbb{P}}\left[F(x,\xi)\right]=\sum_{i=1}^{N}p_{i}F(x,\xi_{i})$. Hence, the problem \eqref{dro_ver_cont} becomes
 \begin{align}
		\min_{x\in\Q} \left\{h(x)+\sup_{p\in \mathcal{P}} \sum_{i=1}^{N}p_{i}f_{i}(x)\right\},
  \label{prob_principal}
	\end{align}
 where $f_i:=F(\cdot,\xi_i)$ for all $i\in \{1,\ldots,N\}$. As far as we know, for the above problem, there are no available splitting algorithms. Hence, the aim of this paper is twofold. First, to propose a new algorithm for solving the following generalization of problem~\eqref{dro_ver_cont} in the discrete case:
  \begin{align}
\label{gen_dro_disc}
\min_{\mathbf{x}=(x_{1},\ldots,x_{N})\in\H^{N}}\left\{H(\mathbf{x})+\sup_{p\in\mathcal{P}}\sum_{i=1}^{N}p_{i}f_{i}(x_{i})\right\}\quad \text{s.t.}\quad\mathbf{x}\in V\cap\bigtimes_{i=1}^{N}\Q_{i},
 \end{align}
 where $H\colon\H^{N}\rightarrow\R$ is a convex differentiable function with $\beta^{-1}$-Lipschitz gradient, $\Q_{i}\subset\H$ is a nonempty closed convex set, $V\subset\H^{N}$ is a closed vector subspace, and $\mathcal{P}\subset \Delta_{N}$ is a nonempty closed convex set. 
 Second, to propose different methods to solve the problem \eqref{prob_principal}. \vspace{0.1cm} 

 On the hand, we observe that whenever $V=\mathcal{D}:=\{\mathbf{x}\in\H^{N}\,:\,x_{1}=\cdots=x_{N}\}$, $H(\mathbf{x})=h(x_{1})$, and $\Q_{i}=\Q$ for all $i\in\{1,\ldots,N\}$, we have that $(x,\ldots,x)\in\mathcal{D}$ is solution to problem~\eqref{gen_dro_disc} if and only if $x\in\H$ is a solution to \eqref{prob_principal}. On the other hand, when $\H=\R^{n}$, $V$ is the nonanticipativity set, and $H=0$, problem~\eqref{gen_dro_disc} reduces to the problem considered in \cite[Example~4]{de2021risk}. In order to solve problem \eqref{gen_dro_disc}, we consider the following functions $(f_{i})_{i=1}^{N}$ and ambiguity sets $\mathcal{P}$. For every $i\in\{1,\ldots,N\}$,
 \begin{enumerate}
 \item\label{item1} $f_{i}(x)=\langle a_{i},x\rangle+\xi_{i}$, where $a_{i}\in \H\backslash\{0\}$ and $\xi_{i}\in\R$.
  \begin{enumerate}
 \item\label{item1_1} $\mathcal{P}=\Delta_{N}$.
     \item\label{item1_2} $\mathcal{P}=\{p\in\Delta_{N}\,:\,p\leq q\}$, where $q\in\R^{N}$ is such that $\text{int}(\R^{N}_{+})\cap\text{int}( \mathcal{P}_{2})\cap\mathcal{P}_{1}\neq\emptyset$ with $\mathcal{P}_{1}=\{p\in\R^{N}\,:\,\sum_{i=1}^{N}p_{i}=1\}$ and $\mathcal{P}_{2}=\{p\in\R^{N}\,:\,p\leq q\}$.
     \item\label{item1_3} $\mathcal{P}=\{p\in\Delta_{N}\,:\,\mu_{-}\leq \sum_{i=1}^{N}p_{i}\xi_{i}\leq\mu_{+}\}$, where $\mu_{-}\in\R$, and $\mu_{+}\in\R$ are such that $\text{int}(\R^{N}_{+})\cap\text{int}( \mathcal{P}_{3})\cap\mathcal{P}_{1}\neq\emptyset$ with  $\mathcal{P}_{3}=\{p\in\R^{N}\,:\,\mu_{-}\leq \sum_{i=1}^{N}p_{i}\xi_{i}\leq\mu_{+}\}$.
     \end{enumerate}
 \item\label{item2} $f_{i}(x)=\|x-\xi_{i}\|^{2}$, where $\xi_{i}\in\H$, and $\mathcal{P}=\Delta_{N}$.
\end{enumerate}
 \begin{remark}
 Note that in the context of problem~\eqref{prob_principal}, the case when $f_{i}(x)=\langle x,Qx\rangle+\langle b_{i},x\rangle+c_{i}$, where $Q\colon\H\rightarrow\H$ is a symmetric positive semidefinite operator, $b_{i}\in\H$, and $c_{i}\in\R$, it reduces to the case \ref{item1}. Indeed, since $\mathcal{P}\subset\Delta_{N}$, we have in this case that
 \begin{align*}
h(x)+\sup_{p\in\mathcal{P}}\sum_{i=1}^{N}p_{i}f_{i}(x)=\underbrace{h(x)+\langle x,Qx\rangle}_{\widetilde{h}(x)}+\sup_{p\in\mathcal{P}}\sum_{i=1}^{N}p_{i}(\langle b_{i},x\rangle+c_{i}),
 \end{align*}
 where $\widetilde{h}$ is convex differentiable with Lipschitz gradient.
 \end{remark}
 
 The case~\ref{item1_2} covers the case when the ambiguity set is associated to the conditional value-at-risk, which is
\begin{align}
\label{set_cvar}
    \mathcal{P}_{\text{CVaR}_{\alpha}}:=\left\{p\in\Delta_{N}\,:\,p\leq \frac{\overline{p}}{1-\alpha}\right\},
\end{align}
where $\overline{p}\in\R^{N}$ is a probability vector and $\alpha\in\left]0,1\right[$. The ambiguity set in \eqref{set_cvar} appears in \cite{de2021risk}. On the other hand, the case~\ref{item1_3} is motivated by the ambiguity set defined in \cite[eq. 4.2.1]{sun2021robust}.
\noindent In the case~\ref{item1}, problem~\eqref{gen_dro_disc} is
 \begin{align}
\label{prob_item2}
\min_{\mathbf{x}\in\H^{N}}\left\{H(\mathbf{x})+\sup_{p\in\mathcal{P}}\sum_{i=1}^{N}p_{i}(\langle a_{i},x_{i}\rangle+\xi_{i})\right\}\quad \text{s.t.}\quad\mathbf{x}\in V\cap\bigtimes_{i=1}^{N}\Q_{i}.
 \end{align}
 In the case~\ref{item2}, problem~\eqref{gen_dro_disc} reduces to
 \begin{align}
\label{prob_item3}
\min_{\mathbf{x}\in\H^{N}}\left\{H(\mathbf{x})+\max_{1\leq i\leq N}\|x_{i}-\xi_{i}\|^{2}\right\}\quad \text{s.t.}\quad\mathbf{x}\in V\cap\bigtimes_{i=1}^{N}\Q_{i}.
 \end{align}
On the other hand, in the case~\ref{item1}, problem~\eqref{prob_principal} is equivalent to
\begin{align}
\label{prob_item4}
\min_{x\in\Q} \left\{h(x)+\sup_{p\in\mathcal{P}}\sum_{i=1}^{N}p_{i}(\langle a_{i},x\rangle+\xi_{i})\right\}.
\end{align}
 
  \paragraph{Notation:} Let $\H$ a real Hilbert space with inner product $\langle\cdot,\cdot\rangle$ and induced norm $\|\cdot\|$.
For every $\lambda>0$, the Moreau-Yosida regularization of $f\colon\H\rightarrow\R\cup\{+\infty\}$ of parameter $\lambda$ is denoted by
\begin{align*}
x\in\H\mapsto e_{\lambda}f(x):=\inf_{u\in\H}\left\{f(u)+\frac{1}{2\lambda}\|x-u\|^{2}\right\}
\end{align*}
and the proximity operator is denoted by
 \begin{align*}
x\in\H\mapsto P_{\lambda}f(x)=\argmin_{y\in\H}\left\{f(y)+\frac{1}{2\lambda}\|x-y\|^{2}\right\}.
 \end{align*}
 Moreover, the projection on a set $\Q\subset \H$ is denoted by
 \begin{align*}
 x\in\H\mapsto \Proj_{\Q}(x)=\argmin_{y\in \Q}\|x-y\|.
 \end{align*}
 The indicator function of a set $\Q$ is denoted by $\iota_{\Q}$, which is equal to zero in $\Q$ and $+\infty$ otherwise. On the other hand, the normal cone to a closed convex set $\Q$ is denoted by $N_{\Q}$ and the interior of $\Q$ is denoted by $\text{int}(\Q)$. Moreover, the set of the functions $f\colon\H\rightarrow\R\cup\{+\infty\}$ that are proper, lower semicontinuous, and convex is denoted by $\Gamma_{0}(\H)$. The convex subdifferential of a function $f\in\Gamma_{0}(\H)$ is denoted by $\partial f$. In addition, the domain of $f$ is denoted by $\dom f=\{x\in\H\,:\,f(x)<+\infty\}$. The conjugate of a function $f\in\Gamma_{0}(\H)$ is denoted by $f^{*}$ and the resolvent of a maximally monotone operator $A$ is denoted by $J_{A}$. Now, the class of bounded linear operators from $\H$ to a real Hilbert space $\mathcal{G}$ is denoted by $\mathcal{L}(\H,\mathcal{G})$ and if $\H=\mathcal{G}$ this class is denoted by $\mathcal{L}(\H)$. Given $L\in\mathcal{L}(\H,\mathcal{G})$, its adjoint operator is denoted by $L^{*}\in\mathcal{L}(\mathcal{G},\H)$. Finally, the expected value of a random variable $X\colon\Omega\rightarrow\Xi$ with respect to a probability measure $\mathbb{P}$ is denoted by $\mathbb{E}_{\mathbb{P}}[X]=\int_{\Omega}X(\omega)d\mathbb{P}(\omega)$.  

 The remainder of this paper is organized as follows. In Section~\ref{sec_prox_sup}, we compute the proximity operator of certain supremum functions. In some cases, we provide an algorithm that converges to the proximity operator, while in a particular case we provide a closed-form for the proximity operator. Next, in Section~\ref{sec_alg}, we will show the different algorithms for solving the problems described in the introduction. In Section~\ref{sec_app}, we provide some applications that can be written as a particular case of the main problem. In Section~\ref{sec_num_exp}, we illustrate the numerical experiments of the proposed algorithms. Finally, conclusions are detailed in Section~\ref{sec_conc}.


\section{Proximity of a supremum function}
\label{sec_prox_sup}
Consider the following function
 \begin{align}
 \label{fun_sup_sep_1}
\mathbf{x}\in\H^{N}\mapsto f(\mathbf{x}):=\sup_{p\in\mathcal{P}}\sum_{i=1}^{N}p_{i}f_{i}(x_{i}).
 \end{align}
 In the following result, we compute the proximity operator of $f$ in the cases \ref{item1} and \ref{item2}. We will see that in the case~\ref{item2}, the proximity of $f$ has a closed-form.
  \begin{proposition}
\label{calc_prox_sup}
Let $\{f_{i}\}_{i=1}^{N}\subset \Gamma_{0}(\H)$ and let $\mathcal{P}\subset\Delta_{N}$ be a nonempty closed convex set. Let $\mathbf{x}\in\H^{N}$ and $\lambda>0$. Then the following hold.
\begin{enumerate}
    \item\label{case_1} If for every $i\in\{1,\ldots,N\}$, $f_{i}(x)=\langle a_{i},x\rangle+\xi_{i}$ with $a_{i}\in\H\backslash\{0\}$ and $\xi_{i}\in\R$, then
    \begin{align}
    \label{prox_sup_aff}
    P_{\lambda}f(\mathbf{x})=\left(x_{i}-\lambda\overline{p}_{i}a_{i}\right)_{i=1}^{N},
    \end{align}
    where $\overline{p}\in\mathcal{P}$ is the unique solution to
     \begin{align}
 \label{prob_aux_1}
\min_{p\in\mathcal{P}} \dfrac{1}{2}p^{\top}Dp-p^{\top}\beta,
 \end{align}
 with $D:=\textnormal{diag}(\lambda\|a_{1}\|^{2},\ldots,\lambda\|a_{N}\|^{2})$ and $\beta:=(\langle a_{i},x_{i}\rangle+\xi_{i})_{i=1}^{N}$.
 \item\label{case_2} If $\mathcal{P}=\Delta_{N}$ and for every $i\in\{1,\ldots,N\}$, $f_{i}(x)=\|x-\xi_{i}\|^{2}$ with $\xi_{i}\in\H$, then
 \begin{align}
 \label{prox_sup_quad}
P_{\lambda}f(\mathbf{x})=\left(\frac{x_{i}+2\lambda \overline{p}_{i}\xi_{i}}{2\lambda \overline{p}_{i}+1}\right)_{i=1}^{N},
 \end{align}
 where $\overline{p}\in\Delta_{N}$ is a solution to
 \begin{align}
\label{prob_aux_2}
\max_{p\in\Delta_{N}} \ell(p):=\sum_{i=1}^{N}\left(\frac{p_{i}}{1+2\lambda p_{i}}\right)\alpha_{i},
\end{align}
and, for every $i\in\{1,\ldots,N\}$, $\alpha_{i}=\|x_{i}-\xi_{i}\|^{2}$.
\end{enumerate}
  \end{proposition}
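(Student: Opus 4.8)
The plan is to write the proximity operator as a saddle-point problem and to interchange the inner minimization over $\mathbf{y}$ with the supremum over $\mathcal{P}$. Setting
\[
\Phi(\mathbf{y},p):=\sum_{i=1}^{N}p_{i}f_{i}(y_{i})+\frac{1}{2\lambda}\|\mathbf{x}-\mathbf{y}\|^{2},
\]
the definition of the proximity operator gives $P_{\lambda}f(\mathbf{x})=\argmin_{\mathbf{y}\in\H^{N}}\sup_{p\in\mathcal{P}}\Phi(\mathbf{y},p)$. Since $\mathcal{P}\subset\Delta_{N}$ is nonempty, convex and compact, while $\Phi(\cdot,p)$ is strongly convex and lower semicontinuous for each $p$ and $\Phi(\mathbf{y},\cdot)$ is affine (hence both quasiconvex and quasiconcave and continuous) for each $\mathbf{y}$, Sion's minimax theorem applies and yields
\[
\min_{\mathbf{y}\in\H^{N}}\sup_{p\in\mathcal{P}}\Phi(\mathbf{y},p)=\sup_{p\in\mathcal{P}}\min_{\mathbf{y}\in\H^{N}}\Phi(\mathbf{y},p).
\]

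Next I would solve the inner minimization explicitly. For fixed $p$, the map $\mathbf{y}\mapsto\Phi(\mathbf{y},p)$ is separable and strongly convex, so its unique minimizer is obtained from the coordinatewise first-order condition $0=p_{i}\nabla f_{i}(y_{i})+\lambda^{-1}(y_{i}-x_{i})$. In case~\ref{case_1} this gives $y_{i}(p)=x_{i}-\lambda p_{i}a_{i}$, and in case~\ref{case_2} it gives $y_{i}(p)=(x_{i}+2\lambda p_{i}\xi_{i})/(2\lambda p_{i}+1)$, which are exactly the expressions in \eqref{prox_sup_aff} and \eqref{prox_sup_quad}. Substituting $y_{i}(p)$ back into $\Phi$ and simplifying the resulting coordinatewise expressions reduces the dual problem $\sup_{p}\min_{\mathbf{y}}\Phi$ to maximizing $p^{\top}\beta-\tfrac12 p^{\top}Dp$ over $\mathcal{P}$ in case~\ref{case_1}, equivalently the program \eqref{prob_aux_1}, and to maximizing $\ell(p)$ of \eqref{prob_aux_2} in case~\ref{case_2}. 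These substitutions are the routine algebraic core (in case~\ref{case_2} one uses $y_{i}(p)-\xi_{i}=(x_{i}-\xi_{i})/(2\lambda p_{i}+1)$ to collapse the two terms into $\alpha_{i}p_{i}/(1+2\lambda p_{i})$).

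It then remains to recover the primal minimizer from a dual optimizer. Because $\mathcal{P}$ is compact and $p\mapsto\min_{\mathbf{y}}\Phi(\mathbf{y},p)$ is upper semicontinuous (a pointwise infimum of functions affine in $p$), the reduced problem attains its maximum at some $\overline{p}\in\mathcal{P}$; in case~\ref{case_1} the matrix $D$ is positive definite since each $a_{i}\neq0$, so \eqref{prob_aux_1} is strictly convex and $\overline{p}$ is unique. Writing $\mathbf{y}^{*}=P_{\lambda}f(\mathbf{x})$ for the primal solution, which is unique by strong convexity, the chain
\[
\min_{\mathbf{y}}\Phi(\mathbf{y},\overline{p})\le\Phi(\mathbf{y}^{*},\overline{p})\le\sup_{p\in\mathcal{P}}\Phi(\mathbf{y}^{*},p)
\]
together with the minimax equality forces both inequalities to be equalities, so $\mathbf{y}^{*}$ minimizes $\Phi(\cdot,\overline{p})$ and hence coincides with the explicit $y_{i}(\overline{p})$ computed above. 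This establishes \eqref{prox_sup_aff} and \eqref{prox_sup_quad}.

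The main obstacle I anticipate is the justification of the minimax interchange and the subsequent primal recovery: one must verify carefully that Sion's hypotheses hold on the noncompact variable $\mathbf{y}\in\H^{N}$ paired with the compact $\mathcal{P}$, and that a saddle point exists so that the primal optimum can be read off from the unique inner minimizer at $\overline{p}$. The algebraic simplifications leading to \eqref{prob_aux_1} and \eqref{prob_aux_2}, while somewhat lengthier in case~\ref{case_2}, are otherwise routine.
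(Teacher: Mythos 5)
Your argument is correct, and the calculations at its core (the coordinatewise optimality conditions giving $y_i(p)=x_i-\lambda p_i a_i$, resp.\ $y_i(p)=(x_i+2\lambda p_i\xi_i)/(1+2\lambda p_i)$, and the back-substitution producing $p^{\top}\beta-\tfrac12 p^{\top}Dp$ and $\ell(p)$) are exactly the ones in the paper: your $\min_{\mathbf{y}}\Phi(\mathbf{y},p)$ is precisely the Moreau envelope $e_{\lambda}g_{p}(\mathbf{x})$ of $g_{p}\colon\mathbf{y}\mapsto\sum_{i}p_{i}f_{i}(y_{i})$ that the paper computes. Where you genuinely diverge is in how the key reduction is justified. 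The paper invokes \cite[Theorem~3.5]{MR4279933} as a black box to conclude $P_{\lambda}f(\mathbf{x})=P_{\lambda}g_{\overline{p}}(\mathbf{x})$ with $\overline{p}\in\argmax_{p\in\mathcal{P}}e_{\lambda}g_{p}(\mathbf{x})$, whereas you derive this special case from scratch via Sion's minimax theorem (with $\mathcal{P}$ compact and $\Phi$ convex--concave) followed by the standard saddle-point sandwich to recover the primal minimizer from the dual optimizer. Your route buys a self-contained proof at the cost of having to verify the minimax hypotheses and the attainment of the dual supremum, both of which you handle correctly (compactness of $\mathcal{P}$, upper semicontinuity of the marginal, strong convexity of $\Phi(\cdot,\overline{p})$ for uniqueness); the paper's route is shorter but leans on the cited general result for supremum functions. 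One minor caveat: your Sion argument uses that each $\Phi(\cdot,p)$ is real-valued and lower semicontinuous, which holds in cases~\ref{case_1} and \ref{case_2} because the $f_i$ there are finite everywhere, but would need more care for general $f_i\in\Gamma_0(\H)$ (the setting of the cited theorem); since the proposition only asserts the formulas in these two concrete cases, this does not affect correctness.
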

  \begin{proof}
Let us define the function $\mathbf{x}\in\H^{N}\mapsto g_{p}(\mathbf{x})=\sum_{i=1}^{N}p_{i}f_{i}(x_{i})$ for all $p\in\mathcal{P}$. Note that, since $f_{i}\in\Gamma_{0}(\H)$ for all $i\in\{1,\ldots,N\}$, then $\{g_{p}\}_{p\in\mathcal{P}}\subset\Gamma_{0}(\H^{N})$. Moreover, the function $p\mapsto g_{p}(\mathbf{x})$ is concave and upper semicontinuous for all $\mathbf{x}\in\H^{N}$. In addition, since $f_{i}$ is proper for all $i\in\{1,\ldots,N\}$ and $\mathcal{P}\subset\Delta_{N}$, then $f=\sup_{p\in\mathcal{P}}g_{p}$ is proper. Furthermore, $\mathcal{P}$ is a nonempty compact and convex set.
Then, by \cite[Theorem~3.5]{MR4279933}, we have that 
  \begin{align}
   \label{prox_sup_sep}
P_{\lambda}f(\mathbf{x})=P_{\lambda}g_{\overline{p}}(\mathbf{x}),\quad\text{ with } \overline{p}\in \displaystyle\argmax_{p\in\mathcal{P}}e_{\lambda}g_{p}(\mathbf{x})
  \end{align}
  Case \ref{case_1}: By \cite[Proposition~24.8(i) \& Proposition~24.11]{MR3616647}, we deduce that
 \begin{align}
\label{prox_gp_2}
P_{\lambda}g_{\overline{p}}(\mathbf{x})=\left(x_{i}-\lambda\overline{p}_{i}a_{i}\right)_{i=1}^{N}.
 \end{align}
 Let us compute $e_{\lambda}g_{p}(\mathbf{x})$. By \eqref{prox_gp_2}, we obtain that
 \begin{align*}
e_{\lambda}g_{p}(\mathbf{x})&=g_{p}(P_{\lambda}g_{p}(\mathbf{x}))+\frac{1}{2\lambda}\|\mathbf{x}-P_{\lambda}g_{p}(\mathbf{x})\|^{2}\\
			&=\sum_{i=1}^{N}p_{i}(\langle a_{i},x_{i}-\lambda p_{i}a_{i}\rangle+\xi_{i})+\frac{1}{2\lambda}\sum_{i=1}^{N}\|x_{i}-x_{i}+\lambda p_{i}a_{i}\|^{2}\\
			&=\sum_{i=1}^{N}p_{i}(\langle a_{i},x_{i}\rangle+\xi_{i})-\lambda\sum_{i=1}^{N}p_{i}^{2}\|a_{i}\|^{2}+\frac{1}{2\lambda}\sum_{i=1}^{N}\lambda^{2}p_{i}^{2}\|a_{i}\|^{2}\\
			&=\sum_{i=1}^{N}p_{i}(\langle a_{i},x_{i}\rangle+\xi_{i})-\frac{\lambda}{2}\sum_{i=1}^{N}p_{i}^{2}\|a_{i}\|^{2}.
 \end{align*}
 Set $\alpha_{i}=\lambda\|a_{i}\|^{2}$ and $\beta_{i}=\langle a_{i},x_{i}\rangle+\xi_{i}$ for all $i\in\{1,\ldots,N\}$. Then, in order to find $\overline{p}$ in \eqref{prox_sup_sep}, we need to solve the following problem
 \begin{align*}
\max_{p\in\mathcal{P}}\sum_{i=1}^{N}p_{i}\beta_{i}-\dfrac{1}{2}\sum_{i=1}^{N}p_{i}^{2}\alpha_{i},
 \end{align*}
 which is equivalent to
 \begin{align}
 \label{prob_p_bar_2}
\min_{p\in\mathcal{P}} \psi(p):=\dfrac{1}{2}p^{\top}Dp-p^{\top}\beta,
 \end{align}
 where $D=\text{diag}(\alpha_{1},\ldots,\alpha_{N})\in\R_{++}^{N\times N}$ and $\beta=(\beta_{i})_{i=1}^{N}\in\R^{N}$. Note that, since $\nabla^{2}\psi(p)=D$ is positive definite, $\psi$ is strictly convex and coercive. Thus, from \cite[Proposition~11.15(i)]{MR3616647}, we have that the problem~\eqref{prob_p_bar_2} has an unique solution.

 Case \ref{case_2}: By \cite[Proposition~24.11 \& Proposition~24.8(i)]{MR3616647}, we obtain that
 \begin{align}
 \label{prox_gp_1}
P_{\lambda}g_{\overline{p}}(\mathbf{x})=\left(\frac{1}{2\lambda \overline{p}_{i}+1}(x_{i}+2\lambda \overline{p}_{i}\xi_{i})\right)_{i=1}^{N}.
 \end{align}
Let us calculate $e_{\lambda}g_{p}(\mathbf{x})$. By \eqref{prox_gp_1}, we have that
\begin{align*}
e_{\lambda}g_{p}(\mathbf{x})&=g_{p}(P_{\lambda}g_{p}(\mathbf{x}))+\frac{1}{2\lambda}\|\mathbf{x}-P_{\lambda}g_{p}(\mathbf{x})\|^{2}\\
&=\sum_{i=1}^{N}p_{i}\left\|\frac{x_{i}+2\lambda p_{i}\xi_{i}}{1+2\lambda p_{i}}-\xi_{i}\right\|^{2}+\frac{1}{2\lambda}\sum_{i=1}^{N}\left\|\frac{x_{i}+2\lambda p_{i}\xi_{i}}{1+2\lambda p_{i}}-x_{i}\right\|^{2}\\
&=\sum_{i=1}^{N}p_{i}\left\|\frac{x_{i}-\xi_{i}}{1+2\lambda p_{i}}\right\|^{2}+\frac{1}{2\lambda}\sum_{i=1}^{N}\left\|\frac{2\lambda p_{i}\xi_{i}-2\lambda p_{i}x_{i}}{1+2\lambda p_{i}}\right\|^{2}\\
&=\sum_{i=1}^{N}\frac{p_{i}\|x_{i}-\xi_{i}\|^{2}}{(1+2\lambda p_{i})^{2}}+\frac{(2\lambda p_{i})^{2}\|\xi_{i}-x_{i}\|^{2}}{2\lambda(1+2\lambda p_{i})^{2}}\\
&=\sum_{i=1}^{N}\frac{p_{i}(1+2\lambda p_{i})}{(1+2\lambda p_{i})^{2}} \|x_{i}-\xi_{i}\|^{2}\\
&=\sum_{i=1}^{N}\left(\frac{p_{i}}{1+2\lambda p_{i}}\right)\|x_{i}-\xi_{i}\|^{2}.
\end{align*}
Set $\alpha_{i}=\|x_{i}-\xi_{i}\|^{2}$ for all $i\in\{1,\ldots,N\}$. Then, in order to find $\overline{p}$ in \eqref{prox_sup_sep}, we need to solve in this case the following problem
	\begin{align}
		\label{prob_paso_1}
		&\max_{p\in \Delta_{N}} \ell(p):=\sum_{i=1}^{N}\left(\frac{p_{i}}{1+2\lambda p_{i}}\right)\alpha_{i}.
	\end{align}
Note that $\ell$ is continuous and $\Delta_{N}$ is a nonempty compact set. Therefore, the problem~\eqref{prob_paso_1} has solutions.
 \end{proof}

 
  \begin{remark}
In the case when $\mathcal{P}=\Delta_{N}$ (case \ref{item1_1}) or $\mathcal{P}=\{p\in\Delta_{N}\,:\,p\leq q\}$ (case \ref{item1_2}), the problem in \eqref{prob_aux_1} can be solved by the method proposed in \cite{cominetti2014newton}.
  \end{remark}
  The following result provides a method for solving \eqref{prob_aux_1} when $\mathcal{P}=\{p\in\Delta_{N}\,:\,\mu_{-}\leq \sum_{i=1}^{N}p_{i}\xi_{i}\leq\mu_{+}\}$ (case \ref{item1_3}). Recall that $\mathcal{P}_{3}=\{p\in\R^{N}\,:\,\mu_{-}\leq \sum_{i=1}^{N}p_{i}\xi_{i}\leq\mu_{+}\}$.
  \begin{proposition}
\label{dykstra_prop_1}
In the context of problem~\eqref{prob_aux_1}, let $R:=\sqrt{D}$, let $x_{0}=R^{-1}\beta$, and let $p_{0}=q_{0}=0$. For every $k\in\mathbb{N}$, we consider the following routine
\begin{equation}
\label{dykstra_alg_1}
\left\lfloor
\begin{array}{ll}
y_{k}=\Proj_{R\mathcal{P}_{3}}(x_{k}+p_{k})\\
p_{k+1}=x_{k}+p_{k}-y_{k} \\
x_{k+1}=\Proj_{R\Delta_{N}}(y_{k}+q_{k})\\
q_{k+1}=y_{k}+q_{k}-x_{k+1}.
\end{array}
\right.
\end{equation}
Then $(x_{k})$ converges to a point $q$ and $p=R^{-1}q$ is the solution to \eqref{prob_aux_1} with $\mathcal{P}=\{p\in\Delta_{N}\,:\,\mu_{-}\leq \sum_{i=1}^{N}p_{i}\xi_{i}\leq\mu_{+}\}$.
  \end{proposition}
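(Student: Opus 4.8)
The plan is to reinterpret problem~\eqref{prob_aux_1}, with $\mathcal{P}=\Delta_{N}\cap\mathcal{P}_{3}$, as the projection of a fixed point onto the intersection of two closed convex sets, and then to recognize the routine~\eqref{dykstra_alg_1} as Dykstra's alternating-projection algorithm applied to that intersection.

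First I would complete the square in the weighted norm induced by $D$. Since each $a_{i}\neq 0$, the diagonal matrix $D$ is positive definite, so $R=\sqrt{D}$ is well defined, symmetric, and invertible. Using $R^{2}=D$ and $RR^{-1}=\Id$ one obtains
\begin{align*}
\tfrac{1}{2}p^{\top}Dp-p^{\top}\beta=\tfrac{1}{2}\|Rp-R^{-1}\beta\|^{2}-\tfrac{1}{2}\beta^{\top}D^{-1}\beta.
\end{align*}
The last term is constant in $p$, so minimizing over $p\in\mathcal{P}$ is equivalent to minimizing $\tfrac{1}{2}\|Rp-x_{0}\|^{2}$ with $x_{0}=R^{-1}\beta$. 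Substituting $q=Rp$ and using that $R$ is a linear bijection (so that $R(\Delta_{N}\cap\mathcal{P}_{3})=R\Delta_{N}\cap R\mathcal{P}_{3}$ by injectivity), the feasible set becomes $R\Delta_{N}\cap R\mathcal{P}_{3}$ and the problem reduces to computing $q=\Proj_{R\Delta_{N}\cap R\mathcal{P}_{3}}(x_{0})$, after which $p=R^{-1}q$ recovers the solution of \eqref{prob_aux_1}.

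Next I would verify the standing hypotheses of the convergence theorem and match the iteration. The simplex $\Delta_{N}$ is compact convex and $\mathcal{P}_{3}$ is an intersection of two half-spaces, hence closed convex; their images under the bounded invertible $R$ are again closed convex. Moreover $R\Delta_{N}\cap R\mathcal{P}_{3}=R\mathcal{P}$ is nonempty, since $\Delta_{N}=\R_{+}^{N}\cap\mathcal{P}_{1}$ and the standing assumption $\text{int}(\R_{+}^{N})\cap\text{int}(\mathcal{P}_{3})\cap\mathcal{P}_{1}\neq\emptyset$ already exhibits a point of $\mathcal{P}$. Taking $C_{1}=R\mathcal{P}_{3}$, $C_{2}=R\Delta_{N}$, initial iterate $x_{0}=R^{-1}\beta$, and zero corrections $p_{0}=q_{0}=0$, the four updates in \eqref{dykstra_alg_1} are exactly the two-set Dykstra scheme, with $p_{k}$ and $q_{k}$ the running corrections associated with $C_{1}$ and $C_{2}$.

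Finally, the Boyle–Dykstra theorem on convergence of Dykstra's algorithm (see \cite{MR3616647}) yields $x_{k}\to\Proj_{C_{1}\cap C_{2}}(x_{0})=q$, and the change of variables gives that $p=R^{-1}q$ solves \eqref{prob_aux_1}. The only delicate points are algebraic rather than analytic: justifying $R(\Delta_{N}\cap\mathcal{P}_{3})=R\Delta_{N}\cap R\mathcal{P}_{3}$ (so that individual projections onto the transformed sets genuinely suffice to reach the projection onto the intersection) and reading off nonemptiness from the interior qualification. Once these are settled, convergence follows immediately from the cited result, so I anticipate no substantive obstacle beyond this bookkeeping.
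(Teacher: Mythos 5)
Your proposal is correct and follows essentially the same route as the paper: complete the square to rewrite the objective as $\tfrac{1}{2}\|Rp-R^{-1}\beta\|^{2}$ up to a constant, change variables $q=Rp$ so the problem becomes the projection of $R^{-1}\beta$ onto $R\Delta_{N}\cap R\mathcal{P}_{3}$, and invoke the standard convergence theorem for Dykstra's two-set alternating projection scheme. The only difference is cosmetic (the paper cites Proposition~5.3 of Combettes--Pesquet rather than Boyle--Dykstra directly), and your extra care about $R(\Delta_{N}\cap\mathcal{P}_{3})=R\Delta_{N}\cap R\mathcal{P}_{3}$ and nonemptiness of the intersection is a welcome tightening of details the paper leaves implicit.
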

  \begin{proof}
Note that the problem~\eqref{prob_aux_1} with $\mathcal{P}=\{p\in\Delta_{N}\,:\,\mu_{-}\leq \sum_{i=1}^{N}p_{i}\xi_{i}\leq\mu_{+}\}$ is equivalent to
\begin{align}
\label{prob_aux_1_2}
\min_{p\in \Delta_{N}\cap \mathcal{P}_{3}}\psi(p):=\dfrac{1}{2}p^{\top}Dp-p^{\top}\beta.
\end{align}
Let us consider the variable change $q=Rp$. Then, since $R=\sqrt{D}$ and $D\in\R^{N\times N}_{++}$ is a diagonal matrix, we have 
\begin{align*}
\psi(q)&=\dfrac{1}{2}(R^{-1}q)^{\top}DR^{-1}q-(R^{-1}q)^{\top}\beta\\
&=\dfrac{1}{2}q^{\top}R^{-1}DR^{-1}q-q^{\top}R^{-1}\beta\\
&=\dfrac{1}{2}q^{\top}q-q^{\top}R^{-1}\beta=\dfrac{1}{2}\|q-R^{-1}\beta\|^{2}-\dfrac{1}{2}\|R^{-1}\beta\|^{2}.
\end{align*}
Thus, problem~\eqref{prob_aux_1_2} is equivalent to find 
\begin{align}
\label{proj_inter}
\overline{q}=\Proj_{R\Delta_{N}\cap R\mathcal{P}_{3}}(R^{-1}\beta).
\end{align}
From \cite[Proposition~5.3]{combettes2011proximal}, the sequence $(x_{k})$ generated by the algorithm in \eqref{dykstra_alg_1} (Dykstra's projection algorithm) converges to the projection $\overline{q}$ in \eqref{proj_inter}. Therefore, $p=R^{-1}\overline{q}$ is the solution to \eqref{prob_aux_1_2}.
  \end{proof}
  \begin{remark}
    In the context of problem~\eqref{prob_aux_1}, denote $D=\text{diag}(\alpha_{1},\ldots,\alpha_{N})$. Note that $R\Delta_{N}=\{q\in\R_{+}^{N}\,:\,\sum_{i=1}^{N}\frac{q_{i}}{\sqrt{\alpha_{i}}}=1\}$ and $R\mathcal{P}_{3}=\{q\in\R^{N}\,:\,\mu_{-}\leq\sum_{i=1}^{N}q_{i}\frac{\xi_{i}}{\sqrt{\alpha_{i}}}\leq\mu_{+}\}$. Then, the projection on $R\Delta_{N}$ can be calculated using the algorithm proposed in \cite{cominetti2014newton} and the projection on $R\mathcal{P}_{3}$ can be calculated through of \cite[Example~29.21]{MR3616647}.
  \end{remark}
	
Let us see now a method for solving the problem~\eqref{prob_aux_2}, where $\alpha_{i}\geq 0$ for all $i\in\{1,\ldots,N\}$. Note that if $\alpha_{i}=0$ for all $i\in\{1,\ldots,N\}$, then every $p\in\Delta_{N}$ is a solution to \eqref{prob_aux_2}. Thus, we can assume that $\displaystyle\max_{1\leq i\leq N}\alpha_{i}>0$. On the other hand, note that for every $p\in \Delta_{N}$, we have
\begin{align*}
	(\nabla^{2}\ell(p))_{ij}=\begin{cases}
		-4\lambda\alpha_{i}(1+2\lambda p_{i})^{-3} & \text{ if } i=j\\
		0 & \text{ if }i\neq j
	\end{cases}\quad\text{for all } i,j\in\{1,\ldots,N\}.
	\end{align*}
Then, $\nabla^{2}(-\ell)(p)$ is a positive semidefinite matrix for all $p\in\Delta_{N}$ and thus $-\ell$ is convex on $\Delta_{N}$. The following result provide an explicit solution to problem \eqref{prob_aux_2}.
	\begin{proposition}
		\label{prop_prob_paso_1}
		In the context of problem \eqref{prob_aux_2}, let $\{\ell_{i}\}_{i=1}^{N}$ such that $\alpha_{\ell_{1}}\leq\cdots\leq\alpha_{\ell_{N}}$ with $\alpha_{\ell_N}>0$ and let $A_{i}:=\{\ell_{1},\ldots,\ell_{i}\}$ for all $i\in\{1,\ldots,N\}$. Define
		\begin{align}
			\label{def_k}
			k:=\min\left\{i\in\{0,\ldots,N-1\}\,:\,(N-i+2\lambda)\sqrt{\alpha_{\ell_{i+1}}}>\sum_{j\notin A_{i}}\sqrt{\alpha_{j}}\right\},
		\end{align}
		where $A_{0}:=\emptyset$. Then $\overline{p}\in \R^{N}$ defined by
		\begin{align}
			\label{sol_prob_paso_1}
   (\forall i\in\{1,\ldots,N\})\quad
			\overline{p}_{i}=
			\begin{cases}
				0, & \text{if}\,\, i\in A_{k};\\
				\dfrac{1}{2\lambda}\left[\dfrac{(N-k+2\lambda)\sqrt{\alpha_{i}}}{\sum_{j\notin A_{k}}\sqrt{\alpha_{j}}} -1\right],& \text{if}\,\, i\notin A_{k}
			\end{cases}
		\end{align}
		is the solution to problem \eqref{prob_aux_2}.
	\end{proposition}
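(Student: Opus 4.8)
The plan is to exploit convexity. Since $-\ell$ is convex on $\Delta_N$ (as already recorded via the sign of $\nabla^2(-\ell)$) and the constraints defining $\Delta_N$ are linear (the affine equality $\sum_i p_i = 1$ together with the bounds $p_i \geq 0$), problem \eqref{prob_aux_2} is a convex program. Consequently the Karush–Kuhn–Tucker conditions are both necessary and sufficient for optimality, so it suffices to exhibit a feasible point satisfying them and to check that it coincides with $\overline{p}$ in \eqref{sol_prob_paso_1}.

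First I would write down the stationarity system. A direct computation gives $\partial \ell / \partial p_i = \alpha_i (1 + 2\lambda p_i)^{-2}$. Introducing a multiplier $\nu \in \R$ for the constraint $\sum_i p_i = 1$ and multipliers $\mu_i \geq 0$ for $p_i \geq 0$, the KKT conditions read $\alpha_i(1+2\lambda p_i)^{-2} = \nu - \mu_i$ together with the complementarity $\mu_i p_i = 0$. For an index with $p_i > 0$ one has $\mu_i = 0$, whence $(1 + 2\lambda p_i)^2 = \alpha_i/\nu$ and, taking the positive root, $p_i = \frac{1}{2\lambda}(\sqrt{\alpha_i/\nu} - 1)$, which forces $\alpha_i > \nu$; for an index with $p_i = 0$ one gets $\alpha_i = \nu - \mu_i \leq \nu$. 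Thus the optimal support is exactly $\{i : \alpha_i > \nu\}$, and because the $\alpha_i$ are sorted as $\alpha_{\ell_1} \leq \cdots \leq \alpha_{\ell_N}$, the set of vanishing coordinates is necessarily a prefix $A_k = \{\ell_1, \ldots, \ell_k\}$.

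Next I would pin down $\nu$ and $k$. Summing $p_i = \frac{1}{2\lambda}(\sqrt{\alpha_i/\nu}-1)$ over $i \notin A_k$ and imposing $\sum_i p_i = 1$ yields $\sqrt{\nu} = (\sum_{j \notin A_k}\sqrt{\alpha_j})/(N - k + 2\lambda)$; substituting this back reproduces exactly formula \eqref{sol_prob_paso_1}. It then remains to show that the index $k$ defined in \eqref{def_k} makes this candidate a genuine KKT point, i.e.\ that it is primal feasible ($p_j \geq 0$ for $j \notin A_k$) and dual feasible ($\alpha_i \leq \nu$ for $i \in A_k$). Since the values $\alpha_{\ell_i}$ are nondecreasing in $i$, both requirements reduce to their binding instances: primal feasibility to $p_{\ell_{k+1}} \geq 0$, i.e.\ $(N-k+2\lambda)\sqrt{\alpha_{\ell_{k+1}}} \geq \sum_{j\notin A_k}\sqrt{\alpha_j}$, which is precisely the (strict) defining inequality for $k$; and dual feasibility to $\alpha_{\ell_k} \leq \nu$, i.e.\ $(N-k+2\lambda)\sqrt{\alpha_{\ell_k}} \leq \sum_{j\notin A_k}\sqrt{\alpha_j}$. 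The latter I would derive from the minimality of $k$: when $k \geq 1$, the defining inequality fails at $i = k-1$, and using $A_{k-1} = A_k \setminus \{\ell_k\}$ to split off the term $\sqrt{\alpha_{\ell_k}}$ from $\sum_{j \notin A_{k-1}}\sqrt{\alpha_j}$ turns this failure into exactly the bound $\alpha_{\ell_k}\leq\nu$ (the case $k=0$ being vacuous, as there are no vanishing coordinates).

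Finally I would check that $k$ is well defined, i.e.\ that the set in \eqref{def_k} is nonempty: evaluating the defining condition at $i = N-1$ gives $(1 + 2\lambda)\sqrt{\alpha_{\ell_N}} > \sqrt{\alpha_{\ell_N}}$, which holds because $\alpha_{\ell_N} > 0$, so $k \leq N-1$ exists. Combining the three verifications shows that $\overline{p}$ in \eqref{sol_prob_paso_1} satisfies the KKT system and hence solves \eqref{prob_aux_2}. The step I expect to be the main obstacle is the equivalence between the minimality in \eqref{def_k} and dual feasibility for all zeroed coordinates: it hinges on the telescoping rearrangement linking the failed inequality at $k-1$ to the bound $\alpha_{\ell_k} \leq \nu$, together with the monotonicity argument that extends the extreme instances $p_{\ell_{k+1}}\geq 0$ and $\alpha_{\ell_k}\leq\nu$ to all active and all vanishing indices, respectively.
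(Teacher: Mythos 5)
Your proposal is correct and follows essentially the same route as the paper: both arguments invoke convexity of $-\ell$ to reduce the problem to verifying the KKT system, choose the same multiplier $\nu=\tau=\bigl(\sum_{j\notin A_{k}}\sqrt{\alpha_{j}}\bigr)^{2}/(N-k+2\lambda)^{2}$, obtain primal feasibility from the defining inequality of $k$ and dual feasibility from its minimality (the failure at $i=k-1$), and check nonemptiness of the index set via $i=N-1$. The only cosmetic difference is that you first derive the candidate's form from the stationarity equations before verifying it, whereas the paper verifies the given formula directly.
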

	\begin{proof}
		First of all, the set in \eqref{def_k} is nonempty since $N-1$ is in that set. Indeed, $(1+2\lambda)\sqrt{\alpha_{\ell_{N}}}>\sqrt{\alpha_{\ell_{N}}}$. Hence $k$ is well defined. Note that $\nabla \ell(p)=\left(\dfrac{\alpha_{i}}{(1+2\lambda p_{i})^{2}}\right)_{i=1}^{N}$ for all $p\in\R^{N}$. Then, since $-\ell$ is convex on $\Delta_{N}$, by the definition of $\Delta_{N}$ and the KKT's conditions, it follows that it is enough to prove that there exists $\tau\in \R$ and $(\mu_{i})_{i=1}^{N}\in\R_{+}^{N}$ such that
		\begin{align}
			\label{cond_kkt}
			&\dfrac{-\alpha_{i}}{(1+2\lambda p_{i})^{2}} +\tau-\mu_{i}=0,\quad\mu_{i}p_{i}=0,\quad p_{i}\geq 0\quad\text{for all }i\in\{1,\ldots,N\},\\
			&\text{and }\sum_{i=1}^{N}p_{i}=1,\label{cond_kkt_2}
		\end{align}
		where $p\in\R^{N}$ is defined by \eqref{sol_prob_paso_1}. Consider
		\begin{align*}
			\tau=\frac{1}{(N-k+2\lambda)^{2}}\left(\sum_{j\notin A_{k}}\sqrt{\alpha_{j}}\right)^{2}\in\R
		\end{align*}
		and $(\mu_{i})_{i=1}^{N}\in \R^{N}$ defined by $\mu_{i}=\tau-\alpha_{i}$ if $i\in A_{k}$ and $\mu_{i}=0$ if $i\notin A_{k}$. Thus, we have the second condition in \eqref{cond_kkt}. Let's now prove the first condition in \eqref{cond_kkt}. Let $i\in\{1,\ldots,N\}$. If $i\in A_{k}$, then $p_{i}=0$ and $\dfrac{-\alpha_{i}}{(1+2\lambda p_{i})^{2}}+\tau-\mu_{i}=-\alpha_{i}+\tau-\mu_{i}=0$. If $i\notin A_{k}$, then by \eqref{sol_prob_paso_1}, we have that $(1+2\lambda p_{i})^{2}=\dfrac{(N-k+2\lambda)^{2}\alpha_{i}}{\left(\sum_{j\notin A_{k}}\sqrt{\alpha_{j}}\right)^{2}}$ and hence
		\begin{align*}
			\frac{-\alpha_{i}}{(1+2\lambda p_{i})^{2}}+\tau-\mu_{i}=-\frac{\left(\sum_{j\notin A_{k}}\sqrt{\alpha_{j}}\right)^{2}}{(N-k+2\lambda)^{2}}+\tau-\mu_{i}=-\mu_{i}=0,
		\end{align*}
		which prove the first condition in \eqref{cond_kkt}.
		We claim that $\mu_{i}\geq 0$ for all $i\in A_{k}$ (note that if $k=0$, the latter is direct since $A_{0}=\emptyset$, so in order to prove this claim we assume that $k>0$). Let $i\in A_{k}$. Then $\alpha_{i}\leq \alpha_{\ell_{k}}$. Now, by definition of $k$, we have that $k-1$ is not in the set in \eqref{def_k}, that is,
		\begin{align*}
			(N-k+1+2\lambda)\sqrt{\alpha_{\ell_{k}}}\leq \sum_{j\notin A_{k-1}}\sqrt{\alpha_{j}} = \sqrt{\alpha_{\ell_{k}}}+\cdots+\sqrt{\alpha_{\ell_{N}}},
		\end{align*}
		which yields that
		\begin{align*}
			(N-k+2\lambda)\sqrt{\alpha_{i}}\leq (N-k+2\lambda)\sqrt{\alpha_{\ell_{k}}}\leq \sqrt{\alpha_{\ell_{k+1}}}+\cdots+\sqrt{\alpha_{\ell_{N}}}=\sum_{j\notin A_{k}}\sqrt{\alpha_{j}},
		\end{align*}
		which implies that $\mu_{i}=\tau-\alpha_{i}=\frac{1}{(N-k+2\lambda)^{2}}\left(\sum_{j\notin A_{k}}\sqrt{\alpha_{j}}\right)^{2}-\alpha_{i}\geq 0$. We claim now that $p_{i}\geq 0$ for all $i\notin A_{k}$. Let $i\notin A_{k}$. Then $\alpha_{i}\geq \alpha_{\ell_{k+1}}$ and therefore
		\begin{align*}
			\dfrac{(N-k+2\lambda)\sqrt{\alpha_{i}}}{\sum_{j\notin A_{k}}\sqrt{\alpha_{j}}}\geq \dfrac{(N-k+2\lambda)\sqrt{\alpha_{\ell_{k+1}}}}{\sum_{j\notin A_{k}}\sqrt{\alpha_{j}}}>1,
		\end{align*}
		where the last inequality is by the definition of $k$. Thus, by definition in \eqref{sol_prob_paso_1}, we obtain that $p_{i}\geq 0$. Finally, since $|A_{k}^{c}|=N-k$, we deduce from definition in \eqref{sol_prob_paso_1}, that $\sum_{i=1}^{N}p_{i}=1$. In summary, we have proved \eqref{cond_kkt}-\eqref{cond_kkt_2}.
	\end{proof}
Consider now the following function
\begin{align}
\label{sup_fun}
x\in\H\mapsto \widetilde{f}(x)=\sup_{p\in\mathcal{P}}\sum_{i=1}^{N}p_{i}(\langle x, Qx\rangle+\langle b_{i},x\rangle+c_{i}),
\end{align}
where $\mathcal{P}\subset\Delta_{N}$ is a nonempty closed convex set, $Q$ is a bounded linear operator which is symmetric positive semidefinite, $b_{i}\in\H$, and $c_{i}\in\R$. The following result provide an algorithm for compute the proximity operator of the function $\widetilde{f}$ in \eqref{sup_fun}.
\begin{proposition}
\label{calc_prox_sup_1}
Let $x\in\H$ and $\lambda>0$. Let $C=(\Id+2\lambda Q)^{-1}$ and $B\in\mathcal{L}(\R^{n},\H)$ defined by $Bp=\sum_{j=1}^{N}p_{j}b_{j}$. Let $M=\lambda B^{*}(\Id-\lambda C^{*}Q-\frac{1}{2}C^{*})CB\in\mathcal{L}(\R^{N})$ and $\gamma\in\R^{N}$ defined by
\begin{align*}
\gamma_{i}=\langle b_{i},C(2\lambda QC-2\Id+C)x\rangle-c_{i}.
\end{align*}
Then the following hold.
\begin{enumerate}
    \item $M$ is a symmetric positive semidefinite operator.
    \item Let $p^{0}\in\R^{N}$, $\overline{p}^{0}\in \R^{N}$, and $q^{0}\in\R^{N}$ such that $p^{0}=\overline{p}^{0}$. Let $L=M^{1/2}$ and let $\tau,\sigma>0$ such that $\tau\sigma\|L\|^{2}<1$. For every $k\in\mathbb{N}$, we consider the following routine
\begin{equation}
\label{alg_prob_aux_3}
\left\lfloor
\begin{array}{ll}
x^{k}=C(x-\lambda Bp^{k})\\
q^{k+1}=\frac{2}{\sigma+2}(q^{k}+\sigma L\overline{p}^{k})\\
u^{k+1}=p^{k}-\tau(L^{*}q^{k+1}+\gamma)\\
p^{k+1}=\Proj_{\mathcal{P}}(u^{k+1})\\
\overline{p}^{k+1}=p^{k+1}+u^{k+1}-p^{k}.
\end{array}
\right.
\end{equation}
Then $x^{k}\rightarrow P_{\lambda}\widetilde{f}(x)$.
\end{enumerate}
\end{proposition}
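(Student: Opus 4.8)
The plan is to mirror the structure of Proposition~\ref{calc_prox_sup}: reduce the computation of $P_{\lambda}\widetilde{f}(x)$ to a finite-dimensional convex program over $\mathcal{P}$ via \cite[Theorem~3.5]{MR4279933}, and then recognize the routine \eqref{alg_prob_aux_3} as a solver for that program. Write $g_{p}(x)=\langle x,Qx\rangle+\langle Bp,x\rangle+\langle c,p\rangle$ with $c=(c_{i})_{i=1}^{N}$, so that $\widetilde{f}=\sup_{p\in\mathcal{P}}g_{p}$. Each $g_{p}\in\Gamma_{0}(\H)$, the map $p\mapsto g_{p}(x)$ is affine (hence concave and upper semicontinuous), and $\mathcal{P}$ is compact and convex; thus \cite[Theorem~3.5]{MR4279933} yields $P_{\lambda}\widetilde{f}(x)=P_{\lambda}g_{\overline{p}}(x)$ for any $\overline{p}\in\argmax_{p\in\mathcal{P}}e_{\lambda}g_{p}(x)$. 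Solving the first-order condition $2Qy+Bp+\tfrac{1}{\lambda}(y-x)=0$ for $y=P_{\lambda}g_{p}(x)$ gives $P_{\lambda}g_{p}(x)=C(x-\lambda Bp)$ with $C=(\Id+2\lambda Q)^{-1}$, which is exactly the first line $x^{k}=C(x-\lambda Bp^{k})$ of \eqref{alg_prob_aux_3}. Hence it suffices to drive $p^{k}$ to a maximizer $\overline{p}$ and pass to the limit using continuity of $p\mapsto C(x-\lambda Bp)$.

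For the first claim I would isolate the algebraic identity that governs everything. Since $Q$ is symmetric positive semidefinite and bounded, $C$ is symmetric, positive definite, bounded, and commutes with $Q$; moreover $C(\Id+2\lambda Q)=\Id$ gives $2\lambda C^{*}Q=2\lambda CQ=\Id-C$. Substituting this into the definition of $M$ collapses the middle factor: $\Id-\lambda C^{*}Q-\tfrac12 C^{*}=\Id-\tfrac12(\Id-C)-\tfrac12 C=\tfrac12\Id$, so $M=\tfrac{\lambda}{2}B^{*}CB$. Because $C\succ0$ is symmetric and $\lambda>0$, $M$ is symmetric positive semidefinite, proving the first part and simultaneously making $L=M^{1/2}$ well defined.

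For the second claim I would compute $e_{\lambda}g_{p}(x)=g_{p}(P_{\lambda}g_{p}(x))+\tfrac{1}{2\lambda}\|x-P_{\lambda}g_{p}(x)\|^{2}$ by inserting $y=C(x-\lambda Bp)$ and expanding. Collecting the terms that are quadratic in $p$ and applying the consequence $2\lambda CQC=C-C^{2}$ of the identity above reduces their operator to $-\tfrac{\lambda}{2}B^{*}CB=-M$; collecting the terms linear in $p$ and using $C(2\lambda QC-2\Id+C)=-C$ identifies their coefficient as $-\gamma$. Thus $e_{\lambda}g_{p}(x)=-\langle p,Mp\rangle-\langle\gamma,p\rangle+\text{const}$, so maximizing $e_{\lambda}g_{p}(x)$ over $\mathcal{P}$ is equivalent to $\min_{p\in\mathcal{P}}\{\langle p,Mp\rangle+\langle\gamma,p\rangle\}$, which is convex by the first part. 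Writing $\langle p,Mp\rangle=\|Lp\|^{2}$, this is $\min_{p}\,\iota_{\mathcal{P}}(p)+\langle\gamma,p\rangle+h(Lp)$ with $h(v)=\|v\|^{2}$; since $h^{*}(q)=\tfrac14\|q\|^{2}$ one gets $\prox_{\sigma h^{*}}(z)=\tfrac{2}{\sigma+2}z$, matching the $q$-update, while $\Proj_{\mathcal{P}}(p^{k}-\tau L^{*}q^{k+1}-\tau\gamma)$ realizes the prox of $\iota_{\mathcal{P}}+\langle\gamma,\cdot\rangle$. I would then invoke convergence of this primal–dual splitting under $\tau\sigma\|L\|^{2}<1$, giving $p^{k}\to\overline{p}$ to a solution, and conclude $x^{k}=C(x-\lambda Bp^{k})\to C(x-\lambda B\overline{p})=P_{\lambda}g_{\overline{p}}(x)=P_{\lambda}\widetilde{f}(x)$.

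The main obstacle is the bookkeeping in the Moreau-envelope expansion: one must repeatedly use $2\lambda CQ=\Id-C$ to collapse the quadratic coefficient to $-M$ and the linear coefficient to $-\gamma$, and a single sign or factor slip there propagates into both $M$ and $\gamma$. A secondary subtlety is that $M$ is only positive semidefinite, so the maximizer $\overline{p}$ need not be unique; this is harmless because \cite[Theorem~3.5]{MR4279933} ensures $P_{\lambda}g_{\overline{p}}(x)$ is the same for every maximizer, and the iterates converge to one such $\overline{p}$, which is all that the final limit argument requires.
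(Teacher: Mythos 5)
Your proposal is correct and follows essentially the same route as the paper: reduce via \cite[Theorem~3.5]{MR4279933} to $P_{\lambda}\widetilde{g}_{\overline{p}}(x)$ with $\overline{p}$ maximizing $p\mapsto e_{\lambda}\widetilde{g}_{p}(x)$, compute $P_{\lambda}\widetilde{g}_{p}(x)=C(x-\lambda Bp)$ from the first-order condition, expand the Moreau envelope to obtain the quadratic program $\min_{p\in\mathcal{P}}\langle p,Mp\rangle+p^{\top}\gamma$, and identify \eqref{alg_prob_aux_3} as a projected primal--dual splitting (the paper invokes \cite[Theorem~3.1]{briceno2019projected}) whose primal iterates converge to a maximizer, after which continuity of $p\mapsto C(x-\lambda Bp)$ finishes the argument. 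The one place you genuinely diverge is part~1: the paper deduces positive semidefiniteness of $M$ indirectly, from concavity of $p\mapsto e_{\lambda}\widetilde{g}_{p}(x)$ as an infimum of affine functions, whereas you use the identity $2\lambda CQ=\Id-C$ to collapse the middle factor to $\tfrac{1}{2}\Id$ and get $M=\tfrac{\lambda}{2}B^{*}CB$ explicitly; this is cleaner, gives symmetry and semidefiniteness in one stroke, and (via $C(2\lambda QC-2\Id+C)=-C$) also simplifies $\gamma_{i}$ to $-\langle b_{i},Cx\rangle-c_{i}$, which the paper does not observe. Your remark that non-uniqueness of $\overline{p}$ is harmless because every maximizer yields the same proximal point is a useful clarification that the paper leaves implicit.
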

\begin{proof}
For every $p\in\mathcal{P}$, let us define the function
\begin{align*}
x\in\H\mapsto \widetilde{g}_{p}(x)=\sum_{i=1}^{N}p_{i}(\langle x,Qx\rangle+\langle b_{i},x\rangle+c_{i}).
\end{align*}
Note that $\{\widetilde{g}_{p}\}_{p\in\mathcal{P}}\subset\Gamma_{0}(\H)$. Moreover, the function $p\mapsto \widetilde{g}_{p}(x)$ is concave and upper semicontinuous for all $x\in\H$. In addition, since $\mathcal{P}\subset\Delta_{N}$, then $\widetilde{f}=\sup_{p\in\mathcal{P}}\widetilde{g}_{p}$ is proper. Furthermore, $\mathcal{P}$ is a nonempty compact and convex set.
Then, by \cite[Theorem~3.5]{MR4279933}, we have that 
  \begin{align}
  \label{prox_sup}
P_{\lambda}\widetilde{f}(x)=P_{\lambda}\widetilde{g}_{\overline{p}}(x),\quad\text{ with } \overline{p}\in \displaystyle\argmax_{p\in\mathcal{P}}e_{\lambda}\widetilde{g}_{p}(x).
  \end{align}
Note that $\widetilde{g}_{p}$ is differentiable. Then, given $\overline{p}\in\mathcal{P}\subset\Delta_{N}$, we have
\begin{align}
y=P_{\lambda}\widetilde{g}_{\overline{p}}(x)&\Leftrightarrow x=y+\lambda\nabla \widetilde{g}_{\overline{p}}(y)\nonumber\\
&\Leftrightarrow x=y+\lambda\sum_{i=1}^{N}\overline{p}_{i}(2Qy+b_{i})\nonumber\\
&\Leftrightarrow x=(\Id+2\lambda Q)y+\lambda\sum_{i=1}^{N}\overline{p}_{i}b_{i}\nonumber\\
&\Leftrightarrow y=(\Id+2\lambda Q)^{-1}\left(x-\lambda\sum_{i=1}^{N}\overline{p}_{i}b_{i}\right)\nonumber\\
&\Leftrightarrow y=C(x-\lambda B\overline{p}),\label{prox_fp_1}
\end{align}
where $C=(\Id+2\lambda Q)^{-1}$ and $B$ is the linear operator defined by $p\mapsto Bp=\sum_{i=1}^{N}p_{i}b_{i}$. Let us compute $e_{\lambda}\widetilde{g}_{p}(x)$ with $p\in\mathcal{P}\subset\Delta_{N}$. By \eqref{prox_fp_1}, we obtain that
 \begin{align}
e_{\lambda}\widetilde{g}_{p}(x)&=\widetilde{g}_{p}(P_{\lambda}\widetilde{g}_{p}(x))+\frac{1}{2\lambda}\|x-P_{\lambda}\widetilde{g}_{p}(x)\|^{2}\nonumber\\
&=\sum_{i=1}^{N}p_{i}(\langle C(x-\lambda Bp),QC(x-\lambda Bp)\rangle+\langle b_{i},C(x-\lambda Bp)\rangle+c_{i})+\dfrac{1}{2\lambda}\|x-C(x-\lambda Bp)\|^{2}\label{mor_env_fp_1}\\
&=:\ell_{1}(p,x)+\ell_{2}(p,x).\nonumber
 \end{align}
  Let $c=(c_{i})_{i=1}^{N}$. Then the first term in \eqref{mor_env_fp_1} is
 \begin{align}
\ell_{1}(p,x)&=\langle C(x-\lambda Bp),QC(x-\lambda Bp)\rangle+\langle Bp,C(x-\lambda Bp)\rangle+p^{\top}c\nonumber\\
&=\langle Cx,QCx\rangle-\lambda\langle Cx,QCBp\rangle-\lambda\langle CBp,QCx\rangle+\lambda^{2}\langle CBp,QCBp\rangle\nonumber\\
&+\langle Bp,Cx\rangle-\lambda\langle Bp,CBp\rangle+p^{\top}c\nonumber\\
&=\langle Cx,QCx\rangle-2\lambda\langle Cx,QCBp\rangle+\lambda^{2}\langle p,(CB)^{*}QCBp\rangle\nonumber\\
&+\langle Bp,Cx\rangle-\lambda\langle p,B^{*}CBp\rangle+p^{\top}c\nonumber\\
&=\langle Cx,QCx\rangle+\lambda^{2}\langle p,(CB)^{*}QCBp\rangle-\lambda\langle p,B^{*}CBp\rangle\nonumber\\
&+\langle Bp,Cx-2\lambda CQCx\rangle+p^{\top}c.\nonumber
 \end{align}
On the other hand, the second term in \eqref{mor_env_fp_1} is
\begin{align}
\ell_{2}(p,x)&=\dfrac{1}{2\lambda}\|(\Id-C)x+\lambda CBp\|^{2}\nonumber\\
&=\frac{1}{2\lambda}\|(\Id-C)x\|^{2}+\langle (\Id-C)x,CBp \rangle+\frac{\lambda}{2}\|CBp\|^{2}.\nonumber\\
&=\dfrac{1}{2\lambda}\|(\Id-C)x\|^{2}+\langle Bp,C(\Id-C)x\rangle+\frac{\lambda}{2}\langle p,(CB)^{*}CBp\rangle.\nonumber
\end{align}
Let $\gamma\in\R^{N}$ defined by
\begin{align}
\gamma_{i}&=\langle b_{i},2\lambda CQCx-Cx-C(\Id-C)x\rangle-c_{i}=\langle b_{i},C(2\lambda QC-2\Id+C) x\rangle-c_{i}.\nonumber
\end{align}
Then
\begin{align}
e_{\lambda}\widetilde{g}_{p}(x)=\frac{1}{2\lambda}\|(\Id-C)x\|^{2}+\langle Cx,QCx\rangle-p^{\top}\gamma-\langle p,Mp\rangle,\nonumber
\end{align}
where $M=\lambda B^{*}CB-\lambda^{2}(CB)^{*}QCB-\frac{\lambda}{2}(CB)^{*}CB=\lambda B^{*}(\Id-\lambda C^{*}Q-\frac{1}{2}C^{*})CB$.
Therefore the problem $\max_{p\in\mathcal{P}}e_{\lambda}\widetilde{g}_{p}(x)$ reduces to
\begin{align}
\label{prob_aux_3}
\max_{p\in\mathcal{P}}-p^{\top}\gamma-\langle p,Mp\rangle\Leftrightarrow \min_{p\in\mathcal{P}}p^{\top}\gamma+\langle p,Mp\rangle.
\end{align}
Observe that, since $C$ and $Q$ are symmetric, $M$ also is symmetric. Now, note that $p\mapsto e_{\lambda}\widetilde{g}_{p}(x)$ is concave since is the infimum of affine linear functions. Then, $p\mapsto\langle p,Mp\rangle+p^{\top}\gamma$ is convex. Thus, $M$ is positive semidefinite. Hence there exists $M^{1/2}$ symmetric positive semidefinite such that $M=M^{1/2}M^{1/2}$. Then, $\langle p,Mp\rangle=\|M^{1/2}p\|^{2}$. Thus, the problem~\eqref{prob_aux_3} is equivalent to
\begin{align}
\label{prob_aux_3_2}
\min_{p\in\mathcal{P}}\overline{g}(Lp)+\overline{h}(p),
\end{align}
where $\overline{g}=\|\cdot\|^{2}$, $L=M^{1/2}$, and $\overline{h}$ is defined by $\overline{h}(p)=p^{\top}\gamma$. Now, we have that $P_{\sigma}\overline{g}(x)=\frac{1}{1+2\sigma}x$, which implies that $P_{\sigma}\overline{g}^{*}(x)=x-\sigma P_{1/\sigma}\overline{g}(x/\sigma)=x-\frac{\sigma}{1+2/\sigma}\cdot\frac{x}{\sigma}=\frac{2}{\sigma+2}x$. In addition, $\nabla \overline{h}(p)=\gamma$ is Lipschitz. Thus, the algorithm \eqref{alg_prob_aux_3} is equivalent to
\begin{equation*}
\left\lfloor
\begin{array}{ll}
x^{k}=C(x-\lambda Bp^{k})\\
q^{k+1}=P_{\sigma}\overline{g}^{*}(q^{k}+\sigma L\overline{p}^{k})\\
u^{k+1}=p^{k}-\tau(L^{*}q^{k+1}+\nabla\overline{h}(p^{k}))\\
p^{k+1}=\Proj_{\mathcal{P}}(u^{k+1})\\
\overline{p}^{k+1}=p^{k+1}+u^{k+1}-p^{k}.
\end{array}
\right.
\end{equation*}
From \cite[Theorem~3.1]{briceno2019projected}, there exists $\overline{p}\in\mathcal{P}$ solution to \eqref{prob_aux_3_2} such that $p^{k}\rightarrow\overline{p}$. That is $\overline{p}\in\argmax_{p\in\mathcal{P}}e_{\lambda}\widetilde{g}_{p}(x)$. Now, by continuity, $x^{k}\rightarrow C(x-\lambda B\overline{p})$. Finally, by \eqref{prox_sup} and \eqref{prox_fp_1}, $C(x-\lambda B\overline{p})=P_{\lambda}\widetilde{f}(x)$.
\end{proof}
\begin{remark}
In the case when $\mathcal{P}=\Delta_{N}$, the projection onto $\mathcal{P}$ in \eqref{alg_prob_aux_3} has an explicit form given in \cite{wang2013projection}. In the case when $\mathcal{P}$ is the ambiguity set given in \ref{item1_2}, we have that $\mathcal{P}=\Delta_{N}\cap\mathcal{P}_{2}$ and the projection onto $\mathcal{P}$ can be calculated using the Dykstra's algorithm \cite[Proposition~5.3]{combettes2011proximal}, which requieres compute the projection onto $\Delta_{N}$ and $\mathcal{P}_{2}$. Note that by using \cite[Proposition~29.3]{MR3616647} the projection onto $\mathcal{P}_{2}$ has an explicit form. Similarly, if $\mathcal{P}$ is the set in \ref{item1_3}, then $\mathcal{P}=\Delta_{N}\cap\mathcal{P}_{3}$ and we can use the Dykstra's algorithm for computing the projection onto $\mathcal{P}$. Note that the projection onto $\mathcal{P}_{3}$ has an explicit form given in \cite[Example~29.21]{MR3616647}.
\end{remark}
 \section{Algorithms}
 \label{sec_alg}
 We will see a splitting algorithm for solving problems \eqref{prob_item2} and \eqref{prob_item3}. This method is based on the results of Section~\ref{sec_prox_sup}, in which we compute the proximity operator of the supremum function appearing in problem~\eqref{gen_dro_disc}. Note that the difficulty of calculating this proximity operator depends on the functions $f_{i}$ and the set $\mathcal{P}$.

On the other hand, to solve problem~\eqref{prob_item4}, we propose a second method, which focuses in the resolution of an equivalent monotone inclusion problem, coming from the optimality conditions via the computation of resolvents of maximally monotone operators involved given in
\cite[Proposition~4.1]{briceno2023perturbation}. In addition, we propose a third and fourth method to solve the problem~\eqref{prob_item4}, which correspond to the application of the algorithms proposed in \cite{briceno2015forward} and \cite{davis2017three}, respectively, to solve a product space formulation involving the normal cone to a closed vector subspace.

\subsection{A proximal algorithm}
Note that the problem~\eqref{gen_dro_disc} is equivalent to
 \begin{align}
 \label{equiv_dro_disc}
\min_{\mathbf{x}\in V} f(\mathbf{x})+\iota_{\overline{\Q}}(\mathbf{x})+H(\mathbf{x}),
 \end{align}
 where $\overline{\Q}=\bigtimes_{i=1}^{N}\Q_{i}$ and $f$ is the function defined in \eqref{fun_sup_sep_1}. The problem~\eqref{equiv_dro_disc} can be solved using the algorithm proposed in \cite{briceno2023primal}, which requires calculating the proximity of $f$, the projection on $\overline{\Q}$, and $\nabla H$.

 Let us define the following functions
 \begin{align*}
		(\alpha,\beta)\in\R_{++}^{N}\times\R^{N}\mapsto \rho(\alpha,\beta)=\argmin_{p\in\mathcal{P}}\left\{\dfrac{1}{2}\sum_{i=1}^{N}p_{i}^{2}\alpha_{i}-\sum_{i=1}^{N}p_{i}\beta_{i}\right\},
	\end{align*}
 \begin{align*}
		(\alpha,\lambda)\in\R_{+}^{N}\times\R_{++}\mapsto \varphi(\alpha,\lambda)=\argmax_{p\in\Delta_{N}}\sum_{i=1}^{N}\left(\frac{p_{i}}{1+2\lambda p_{i}}\right)\alpha_{i}.
\end{align*}
	In order to solve the problem in the definition of $\rho$, we use the method proposed in \cite{cominetti2014newton} for the cases \ref{item1_1} and \ref{item1_2}, and the algorithm \eqref{dykstra_alg_1} for the case \ref{item1_3}. On the other hand, the explicit form of $\varphi$ is given by \eqref{sol_prob_paso_1}.
 From Proposition~\ref{calc_prox_sup}\ref{case_1}, \cite[Proposition~24.8(ix)]{MR3616647}, and \cite[Theorem~3.1]{briceno2023primal}, we obtain the following
	\begin{proposition}
		\label{conv_alg_aplic_2}
		Let $\mathbf{u}^{0}\in\H^{N}$, let $\mathbf{x}^{0}\in V$, let $\overline{\mathbf{x}}^{0}=\mathbf{x}^{0}$, and let $\mathbf{y}^{0}\in V^{\perp}$. Let $\lambda\in\left]0,2\beta\right[$ and let $\gamma>0$ such that $\gamma<\frac{1}{\lambda}-\frac{1}{2\beta}$. For every $k\in\mathbb{N}$, we consider the following routine.
		
\begin{equation}
\label{alg_pi_2}
\left\lfloor
\begin{array}{ll}
u_{i}^{k+1}=u_{i}^{k}+\gamma \overline{x}_{i}^{k}-\gamma \Proj_{\Q_{i}}\left( \frac{u_{i}^{k}}{\gamma}+\overline{x}_{i}^{k}\right)\quad\textnormal{for all }i\in \{1,\ldots,N\}\\
\overline{\mathbf{z}}^{k}=\mathbf{x}^{k}+\lambda \mathbf{y}^{k}-\lambda\Proj_{V}(\mathbf{u}^{k+1}+\nabla H(\mathbf{x}^{k}))\\
p^{k}=\rho((\lambda\|a_{i}\|^{2})_{i=1}^{N},(\langle a_{i},\overline{z}_{i}^{k}\rangle+\xi_{i})_{i=1}^{N})\\
	w_{i}^{k+1}=\overline{z}_{i}^{k}-\lambda p_{i}^{k}a_{i}\quad\textnormal{for all}\,\, i\in\{1,\ldots,N\}\\
	\mathbf{x}^{k+1}=\Proj_{V} \mathbf{w}^{k+1}\\
\mathbf{y}^{k+1}=\mathbf{y}^{k}+(\mathbf{x}^{k+1}-\mathbf{w}^{k+1})/\lambda\\
 \overline{\mathbf{x}}^{k+1}=2\mathbf{x}^{k+1}-\mathbf{x}^{k}.
   \end{array}
   \right.
\end{equation}
Then there exists $\mathbf{x}\in\H$ solution to problem~\eqref{prob_item2} such that $\mathbf{x}^{k}\rightharpoonup \mathbf{x}$.
	\end{proposition}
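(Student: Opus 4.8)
The plan is to read \eqref{prob_item2} in its equivalent form \eqref{equiv_dro_disc}, i.e.\ the minimization over the closed subspace $V$ of the sum $f+\iota_{\overline{\Q}}+H$, and to instantiate the primal--dual splitting scheme of \cite[Theorem~3.1]{briceno2023primal} on this data. In that template $H$ plays the role of the smooth term (its gradient is $\beta^{-1}$-Lipschitz, equivalently $\nabla H$ is $\beta$-cocoercive by Baillon--Haddad), $f$ is the proximable term whose proximity operator is furnished by Proposition~\ref{calc_prox_sup}\ref{case_1}, the product set $\overline{\Q}=\bigtimes_{i=1}^{N}\Q_{i}$ is accommodated through the dual variable $\mathbf{u}$, and the subspace constraint $\mathbf{x}\in V$ is encoded by the dual variable $\mathbf{y}\in V^{\perp}$.

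Then I would identify each block of \eqref{alg_pi_2} with the corresponding operation of the abstract iteration. Since $\iota_{\overline{\Q}}=\sum_{i=1}^{N}\iota_{\Q_{i}}$ is separable, \cite[Proposition~24.8(ix)]{MR3616647} gives $\Proj_{\overline{\Q}}=(\Proj_{\Q_{i}})_{i=1}^{N}$, and Moreau's decomposition rewrites the proximal step on the conjugate of $\iota_{\overline{\Q}}$ exactly as the coordinatewise update $u_{i}^{k+1}=u_{i}^{k}+\gamma\overline{x}_{i}^{k}-\gamma\Proj_{\Q_{i}}(u_{i}^{k}/\gamma+\overline{x}_{i}^{k})$. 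The lines producing $\overline{\mathbf{z}}^{k}$, $\mathbf{x}^{k+1}$ and $\mathbf{y}^{k+1}$ are the forward (gradient-on-$H$) step together with the projection $\Proj_{V}$ realizing the subspace constraint and its associated dual update, while the over-relaxation $\overline{\mathbf{x}}^{k+1}=2\mathbf{x}^{k+1}-\mathbf{x}^{k}$ supplies the extrapolation required by the scheme. Finally, the pair $(p^{k},\mathbf{w}^{k+1})$ is precisely the evaluation of $P_{\lambda}f(\overline{\mathbf{z}}^{k})$: with $D=\textnormal{diag}(\lambda\|a_{1}\|^{2},\ldots,\lambda\|a_{N}\|^{2})$ and data $(\langle a_{i},\overline{z}_{i}^{k}\rangle+\xi_{i})_{i=1}^{N}$, Proposition~\ref{calc_prox_sup}\ref{case_1} yields the optimal weight $p^{k}=\rho((\lambda\|a_{i}\|^{2})_{i=1}^{N},(\langle a_{i},\overline{z}_{i}^{k}\rangle+\xi_{i})_{i=1}^{N})$ together with the formula $w_{i}^{k+1}=\overline{z}_{i}^{k}-\lambda p_{i}^{k}a_{i}$.

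With this dictionary in place, I would verify the stepsize hypotheses of \cite[Theorem~3.1]{briceno2023primal}: the window $\lambda\in\left]0,2\beta\right[$ controls the forward step against the cocoercivity constant $\beta$ of $\nabla H$, and $\gamma<\frac{1}{\lambda}-\frac{1}{2\beta}$ is the compatibility bound between the two parameters guaranteeing the firm nonexpansiveness underlying convergence. The cited theorem then produces a sequence with $\mathbf{x}^{k}\rightharpoonup\mathbf{x}$, where $\mathbf{x}$ solves \eqref{equiv_dro_disc}; by the equivalence recorded above, $\mathbf{x}$ solves \eqref{prob_item2}.

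The main obstacle I anticipate is the bookkeeping of matching \eqref{alg_pi_2} term by term with the general iteration of \cite[Theorem~3.1]{briceno2023primal}---confirming the exact placement of the two projections onto $V$, the extrapolation step, and the convention by which $f$, $\iota_{\overline{\Q}}$ and the constraint $V$ are distributed among the smooth, proximal and dual blocks---together with checking that the stated stepsize window coincides with the one required there. Since the proximity operator of $f$ and the projections are already supplied by the earlier results, once this correspondence is pinned down the convergence statement is inherited directly.
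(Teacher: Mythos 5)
Your proposal matches the paper's argument: the paper derives this proposition exactly by rewriting \eqref{prob_item2} as \eqref{equiv_dro_disc} and invoking Proposition~\ref{calc_prox_sup}\ref{case_1} for $P_{\lambda}f$, \cite[Proposition~24.8(ix)]{MR3616647} for the separable projection onto $\overline{\Q}$, and \cite[Theorem~3.1]{briceno2023primal} for convergence under the stated stepsize conditions. The identification of each line of \eqref{alg_pi_2} with the abstract iteration is the same bookkeeping the paper leaves implicit.
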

 Similarly, from Proposition~\ref{calc_prox_sup}\ref{case_2}, \cite[Proposition~24.8(ix)]{MR3616647}, and \cite[Theorem~3.1]{briceno2023primal}, we obtain the following
 \begin{proposition}
		\label{conv_alg_aplic_1}
		Let $\mathbf{u}^{0}\in\H^{N}$, let $\mathbf{x}^{0}\in V$, let $\overline{\mathbf{x}}^{0}=\mathbf{x}^{0}$, and let $\mathbf{y}^{0}\in V^{\perp}$. Let $\lambda\in\left]0,2\beta\right[$ and let $\gamma>0$ such that $\gamma<\frac{1}{\lambda}-\frac{1}{2\beta}$. For every $k\in\mathbb{N}$, we consider the following routine.
  \begin{equation}
\label{alg_pi_1}
   \left\lfloor
   \begin{array}{ll}
u_{i}^{k+1}=u_{i}^{k}+\gamma \overline{x}_{i}^{k}-\gamma \Proj_{\Q_{i}}\left( \frac{u_{i}^{k}}{\gamma}+\overline{x}_{i}^{k}\right)\quad\textnormal{for all }i\in \{1,\ldots,N\}\\
\overline{\mathbf{z}}^{k}=\mathbf{x}^{k}+\lambda \mathbf{y}^{k}-\lambda\Proj_{V}(\mathbf{u}^{k+1}+\nabla H(\mathbf{x}^{k}))\\
p^{k}=\varphi((\|\overline{z}_{i}^{k}-\xi_{i}\|^{2})_{i=1}^{N},\lambda)\\
\vspace{0.1cm}
	w_{i}^{k+1}=\dfrac{1}{1+2\lambda p_{i}^{k}}(\overline{z}_{i}^{k}+2\lambda p_{i}^{k}\xi_{i})\quad\textnormal{for all}\,\, i\in\{1,\ldots,N\}\\
\mathbf{x}^{k+1}=\Proj_{V} \mathbf{w}^{k+1}\\
\mathbf{y}^{k+1}=\mathbf{y}^{k}+(\mathbf{x}^{k+1}-\mathbf{w}^{k+1})/\lambda\\
\overline{\mathbf{x}}^{k+1}=2\mathbf{x}^{k+1}-\mathbf{x}^{k}.
   \end{array}
   \right.
		\end{equation}
		Then there exists $\mathbf{x}\in\H$ solution to problem~\eqref{prob_item3} such that $\mathbf{x}^{k}\rightharpoonup \mathbf{x}$.
	\end{proposition}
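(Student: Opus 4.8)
The plan is to recognize the routine \eqref{alg_pi_1} as the specialization of the primal--dual method of \cite[Theorem~3.1]{briceno2023primal} to problem \eqref{equiv_dro_disc}, in which the only problem-specific ingredients are the proximity operator of the supremum function $f$ and the resolvent of the conjugate of $\iota_{\overline{\Q}}$. First I would rewrite problem \eqref{prob_item3} in the form \eqref{equiv_dro_disc}, namely as $\min_{\mathbf{x}\in\H^{N}} f(\mathbf{x})+\iota_{\overline{\Q}}(\mathbf{x})+\iota_{V}(\mathbf{x})+H(\mathbf{x})$, recording that in case \ref{case_2} one has $\mathcal{P}=\Delta_{N}$, so that $f(\mathbf{x})=\max_{1\le i\le N}\|x_{i}-\xi_{i}\|^{2}$ is exactly the supremum function \eqref{fun_sup_sep_1}. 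Each of $f$ and $\iota_{\overline{\Q}}=\sum_{i=1}^{N}\iota_{\Q_{i}}$ lies in $\Gamma_{0}(\H^{N})$, $V$ is a closed subspace, and $H$ is convex differentiable with $\beta^{-1}$-Lipschitz gradient, so the structural hypotheses of \cite[Theorem~3.1]{briceno2023primal} are in force.

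Next I would match the iteration steps one by one with the abstract scheme. The block updating $\mathbf{u}^{k+1}$ is the proximal step on the dual variable associated with $\iota_{\overline{\Q}}$: writing $v_{i}=u_{i}^{k}+\gamma\overline{x}_{i}^{k}$, the Moreau decomposition \cite[Proposition~24.8(ix)]{MR3616647} yields $\prox_{\gamma\iota_{\Q_{i}}^{*}}(v_{i})=v_{i}-\gamma\Proj_{\Q_{i}}(v_{i}/\gamma)$, which is precisely the stated update since $\iota_{\Q_{i}}^{*}$ is the support function of $\Q_{i}$ and $\prox_{\gamma\iota_{\Q_{i}}}=\Proj_{\Q_{i}}$. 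The two lines producing $p^{k}$ and $\mathbf{w}^{k+1}$ are exactly the evaluation $\mathbf{w}^{k+1}=P_{\lambda}f(\overline{\mathbf{z}}^{k})$: by Proposition~\ref{calc_prox_sup}\ref{case_2}, the maximizer $\overline{p}=\varphi((\|\overline{z}_{i}^{k}-\xi_{i}\|^{2})_{i=1}^{N},\lambda)$ of \eqref{prob_aux_2} produces the closed form \eqref{prox_sup_quad}, which is the definition of $w_{i}^{k+1}$. Finally, the steps for $\overline{\mathbf{z}}^{k}$, $\mathbf{x}^{k+1}$ and $\mathbf{y}^{k+1}$ implement the forward step on $\nabla H$ together with the projection $\Proj_{V}$ and the dual update for the subspace constraint $\iota_{V}$, exactly as in the template.

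Having identified the iteration, I would then verify that the stepsize conditions $\lambda\in\left]0,2\beta\right[$ and $0<\gamma<\frac1\lambda-\frac1{2\beta}$ coincide with the admissible range of \cite[Theorem~3.1]{briceno2023primal} (the constant $2\beta$ reflecting that $\nabla H$ is $\beta^{-1}$-Lipschitz), and check the qualification condition guaranteeing that the associated primal--dual inclusion admits a solution, hence that \eqref{prob_item3} is solvable. The cited theorem then delivers a primal--dual solution together with weak convergence of the primal iterates; since $\mathbf{x}^{k+1}=\Proj_{V}\mathbf{w}^{k+1}\in V$ for every $k$ and the limit minimizes \eqref{equiv_dro_disc}, it is a solution of \eqref{prob_item3}, which gives $\mathbf{x}^{k}\rightharpoonup\mathbf{x}$.

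I expect the main obstacle to be the bookkeeping of the second step: correctly matching the two dual blocks $\mathbf{u}$ (for $\overline{\Q}$) and $\mathbf{y}$ (for $V$) and the ordering of the forward/backward operations against the precise statement of \cite[Theorem~3.1]{briceno2023primal}, and confirming that the pair of lines defining $p^{k}$ and $\mathbf{w}^{k+1}$ returns the genuine proximity operator $P_{\lambda}f$ rather than a pointwise surrogate. This is exactly where Proposition~\ref{calc_prox_sup}\ref{case_2} is indispensable, since it certifies that maximizing $e_{\lambda}g_{p}$ over $p\in\Delta_{N}$ and then applying $P_{\lambda}g_{\overline{p}}$ reproduces $P_{\lambda}f$; once this identification is in place, convergence is inherited directly from the general theorem.
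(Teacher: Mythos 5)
Your proposal matches the paper's own argument: the paper derives this proposition in one line from Proposition~\ref{calc_prox_sup}\ref{case_2} (which gives $\mathbf{w}^{k+1}=P_{\lambda}f(\overline{\mathbf{z}}^{k})$ via the closed form \eqref{prox_sup_quad} with $\overline{p}=\varphi(\cdot,\lambda)$), the Moreau decomposition \cite[Proposition~24.8(ix)]{MR3616647} for the $\mathbf{u}$-update, and \cite[Theorem~3.1]{briceno2023primal} applied to the formulation \eqref{equiv_dro_disc}. Your step-by-step matching of the iteration blocks and stepsize conditions is exactly the verification the paper leaves implicit, so the approach is the same and correct.
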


 
 \subsection{Distributed forward-backward method}
 In this section, we propose an alternative method for solving problem~\eqref{prob_principal}. Note that in the cases~\ref{item1} and \ref{item2}, we have that $\dom f_{i}=\H$ for all $i\in\{1,\ldots,N\}$. In addition, $\dom h=\H$. Thus, by the Fermat's rule and \cite[Corollary~16.50(iv)]{MR3616647}, the problem~\eqref{prob_principal} is equivalent to solve the following inclusion (optimality condition)
 \begin{align}
 \label{inc_1}
 \text{Find}\quad (x,p)\in\H\times\R^{N}\text{ such that }
\begin{cases}
0\in \nabla h(x)+N_{\Q}(x)+\sum_{i=1}^{N}p_{i}\partial f_{i}(x)\\
0\in N_{\mathcal{P}}(p)-G(x),
\end{cases}
 \end{align}
 where $G(x)=(f_{1}(x),\ldots, f_{N}(x))^{\top}\in\R^{N}$. If $\mathcal{P}=\Delta_{N}$, then $\mathcal{P}=\R_{+}^{N}\cap \mathcal{P}_{1}$, where $\mathcal{P}_{1}=\{p\in\R^{N}\,:\,\sum_{i=1}^{N}p_{i}=1\}$. Since $\text{int}(\R_{+}^{N})\cap \mathcal{P}_{1}\neq\emptyset$, then by \cite[Corollary~16.48(ii)]{MR3616647}, $N_{\mathcal{P}}=N_{\R_{+}^{N}}+N_{\mathcal{P}_{1}}$. Now, by defining $\phi=\iota_{\R_{-}}$, we have that $N_{\R_{+}}=\partial\phi^{*}$. Define 
 \begin{align}
&(x,p)\in\H\times\R^{N}\mapsto B_{i}(x,p):=\begin{pmatrix}
p_{i}\partial f_{i}(x)\\ \partial\phi^{*}(p_{i})e_{i}-f_{i}(x)e_{i}
\end{pmatrix}\quad\text{for all }i\in\{1,\ldots,N\}\label{B_i}\\
&(x,p)\in\H\times\R^{N}\mapsto A_{1}(x,p)=\begin{pmatrix}
N_{\Q}(x)\\ N_{\mathcal{P}_{1}}(p)
\end{pmatrix}\nonumber\\
&(x,p)\in\H\times\R^{N}\mapsto C(x,p)=\begin{pmatrix}
\nabla h(x)\\ 0\nonumber
\end{pmatrix},
 \end{align}
 where $e_{i}$ is the canonical vector in $\R^{N}$. Then the inclusion \eqref{inc_1} is equivalent to
 \begin{align}
\label{inc_2}
\begin{pmatrix}
0\\0
\end{pmatrix}
\in A_{1}(x,p)+\sum_{i=1}^{N}B_{i}(x,p)+\sum_{i=1}^{N}C_{i}(x,p),
 \end{align}
 where $C_{1}=C$ and $C_{i}=0$ for all $i\in\{2,\ldots,N\}$. If $\mathcal{P}=\{p\in\Delta_{N}\,:\,p\leq q\}$, then $\mathcal{P}=\R_{+}^{N}\cap\mathcal{P}_{1}\cap\mathcal{P}_{2}$, where $\mathcal{P}_{2}=\{p\in\R^{N}\,:\,p\leq q\}$. By assumption in \ref{item1_2}, $\mathcal{P}_{1}\cap\text{int}(\R_{+}^{N})\cap\text{int}(\mathcal{P}_{2})\neq\emptyset$. Hence, by \cite[Corollary~16.50(iv)]{MR3616647}, $N_{\mathcal{P}}=N_{\R_{+}^{N}}+N_{\mathcal{P}_{1}}+N_{\mathcal{P}_{2}}$. Thus, in this case the inclusion \eqref{inc_1} is equivalent to
  \begin{align}
\label{inc_2_2}
\begin{pmatrix}
0\\0
\end{pmatrix}
\in A_{1}(x,p)+A_{2}(x,p)+\sum_{i=1}^{N}B_{i}(x,p)+\sum_{i=1}^{N+1}C_{i}(x,p),
 \end{align}
 where $(x,p)\in\H\times\R^{N}\mapsto A_{2}(x,p)=\begin{pmatrix} 0\\N_{\mathcal{P}_{2}}(p)\end{pmatrix}$ and $C_{N+1}=0$. Similarly, if $\mathcal{P}=\{p\in\Delta_{N}\,:\,\mu_{-}\leq\sum_{i=1}^{N}p_{i}\xi_{i}\leq\mu_{+}\}$, we deduce that the inclusion \eqref{inc_1} is equivalent to
  \begin{align}
\label{inc_2_3}
\begin{pmatrix}
0\\0
\end{pmatrix}
\in A_{1}(x,p)+A_{3}(x,p)+\sum_{i=1}^{N}B_{i}(x,p)+\sum_{i=1}^{N+1}C_{i}(x,p),
 \end{align}
 where $(x,p)\in\H\times\R^{N}\mapsto A_{3}(x,p)=\begin{pmatrix} 0\\N_{\mathcal{P}_{3}}(p)\end{pmatrix}$ and $\mathcal{P}_{3}=\{p\in\R^{N}\,:\,\mu_{-}\leq \sum_{i=1}^{N}p_{i}\xi_{i}\leq\mu_{+}\}$.
 For apply the splitting methods in order to solve the previous inclusions, we need to prove that $B_{i}$ is maximally monotone and compute $J_{\gamma B_{i}}$ with $\gamma>0$. 
 \begin{proposition}
\label{res_B_i}
Let $i\in\{1,\ldots,N\}$ and consider the operator $B_{i}$ defined in \eqref{B_i}, where $f_{i}\in\Gamma_{0}(\H)$ satisfies $\dom f_{i}=\H$. Then the following hold.
\begin{enumerate}
    \item\label{prop_item1} $B_{i}$ is maximally monotone.
    \item\label{prop_item2} Let $(x,p)\in\H\times\R^{N}$, let $\gamma>0$, and let $\overline{\omega}$ the unique real number in $\left[0,+\infty\right[$ such that
    \begin{align}
 \label{def_omega_2}
\overline{\omega}=\begin{cases}
0 &\text{ if } p_{i}+\gamma f_{i}(x)\leq 0\\
p_{i}+\gamma f_{i}(P_{\gamma\overline{\omega}}f_{i}(x)) &\text{ if }p_{i}+\gamma f_{i}(x)>0.
\end{cases}
 \end{align}
 Then
 \begin{align*}
J_{\gamma B_{i}}(x,p)=\begin{cases}
(x,\omega)&\text{ if } p_{i}+\gamma f_{i}(x)\leq 0\\
(P_{\gamma\omega_{i}}f_{i}(x),\omega)&\text{ if }p_{i}+\gamma f_{i}(x)>0,
\end{cases}
 \end{align*}
 where $\omega_{i}=\overline{\omega}$ and $\omega_{j}=p_{j}$ for all $j\neq i$.
\end{enumerate}
 \end{proposition}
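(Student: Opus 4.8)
The plan is to establish both claims at once, obtaining maximal monotonicity as a by-product of the explicit resolvent computation via Minty's theorem, rather than appealing to the abstract theory of saddle functions. The conceptual point is that, modulo the coordinates $p_j$ with $j\neq i$ on which $B_i$ acts as the zero operator, $B_i$ is the primal--dual (Kuhn--Tucker) operator associated with the convex--concave Lagrangian $(x,s)\mapsto s\,f_i(x)$ on $\H\times\R_{+}$, where the constraint $s\geq 0$ is encoded through $\partial\phi^{*}=N_{\R_{+}}$. This structure is what makes $B_i$ monotone and what makes its resolvent decouple into a single scalar fixed-point equation in the variable $\omega_i$.

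For \ref{prop_item1} I would verify monotonicity directly. Take two points $(x,p),(x',p')$ and elements of $B_i$ there determined by $u\in\partial f_i(x)$, $u'\in\partial f_i(x')$, $v\in N_{\R_{+}}(p_i)$, $v'\in N_{\R_{+}}(p_i')$ (so in particular $p_i,p_i'\geq 0$, since $N_{\R_+}(p_i)\neq\emptyset$ forces $p_i\geq 0$). Since $B_i$ vanishes on the coordinates $j\neq i$, the monotonicity inner product reduces to
\begin{align*}
\langle x-x',\,p_i u-p_i'u'\rangle+(p_i-p_i')\big[(v-v')-(f_i(x)-f_i(x'))\big].
\end{align*}
The term $(p_i-p_i')(v-v')\geq 0$ by monotonicity of $N_{\R_{+}}$, and the remainder regroups as $p_i P-p_i' Q$, where $P:=\langle x-x',u\rangle-(f_i(x)-f_i(x'))\geq 0$ and $Q:=\langle x-x',u'\rangle-(f_i(x)-f_i(x'))\leq 0$ by the subgradient inequalities for $f_i$ at $x$ and at $x'$. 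As $p_i,p_i'\geq 0$, this is nonnegative, which gives monotonicity.

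For the resolvent I would write the inclusion $(x,p)\in(\xi,\omega)+\gamma B_i(\xi,\omega)$ componentwise. The coordinates $j\neq i$ give $\omega_j=p_j$, and the nontrivial part is the coupled system $x\in\xi+\gamma\omega_i\partial f_i(\xi)$ together with $p_i+\gamma f_i(\xi)\in\omega_i+\gamma N_{\R_{+}}(\omega_i)$. The first relation yields $\xi=P_{\gamma\omega_i}f_i(x)$ when $\omega_i>0$ (and $\xi=x$ when $\omega_i=0$), while the second is resolved by $\omega_i=\Proj_{\R_{+}}(p_i+\gamma f_i(\xi))=\max\{0,\,p_i+\gamma f_i(\xi)\}$. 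Substituting the first into the second collapses everything to the scalar equation $\omega_i=\theta(\omega_i)$ with $\theta(\omega):=p_i+\gamma f_i(P_{\gamma\omega}f_i(x))$, which is precisely the description of $\overline{\omega}$ in \eqref{def_omega_2}.

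The main work, and the step I expect to be the real obstacle, is the existence/uniqueness analysis of this scalar fixed point with the correct case split. Here I would use the standard facts that $\omega\mapsto P_{\gamma\omega}f_i(x)$ is continuous and that $\lambda\mapsto f_i(P_{\lambda}f_i(x))$ is continuous and nonincreasing (the latter follows by adding the two proximal optimality inequalities at parameters $\lambda_1<\lambda_2$); moreover, since $\dom f_i=\H$, $f_i$ is finite and continuous. Consequently $\theta$ is continuous and nonincreasing on $[0,\infty)$, so $H(\omega):=\omega-\theta(\omega)$ is continuous and strictly increasing with $H(0)=-(p_i+\gamma f_i(x))$ and $H(\omega)\to+\infty$. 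If $p_i+\gamma f_i(x)\leq 0$, then $H\geq 0$ on $[0,\infty)$, so $\overline{\omega}=0$ is the unique root and $\xi=x$; if $p_i+\gamma f_i(x)>0$, then $H(0)<0$ and the intermediate value theorem gives a unique $\overline{\omega}>0$ with $\overline{\omega}=\theta(\overline{\omega})>0$, so the projection in the second relation is active and $\xi=P_{\gamma\overline{\omega}}f_i(x)$; this proves \ref{prop_item2}. Finally, since the computation shows that for every $(x,p)$ and every $\gamma>0$ the inclusion has a (unique) solution, $\ran(\Id+\gamma B_i)=\H\times\R^{N}$; combined with the monotonicity established above, Minty's theorem (e.g.\ \cite[Theorem~21.1]{MR3616647}) yields that $B_i$ is maximally monotone, completing \ref{prop_item1}.
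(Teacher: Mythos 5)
Your proof is correct, but it is genuinely more self-contained than the one in the paper, and the logical order is reversed. The paper proves \ref{prop_item1} first by importing the maximal monotonicity of the two-variable Kuhn--Tucker operator $(x,\xi)\mapsto \xi\partial f_{i}(x)\times(\partial\phi^{*}(\xi)-f_{i}(x))$ from \cite[Proposition~3.1(v)]{briceno2023perturbation} (monotonicity of $B_i$ then follows by restriction to the $i$-th coordinate, and surjectivity of $\Id+B_i$ by lifting the surjectivity of the smaller operator), and it then settles \ref{prop_item2} by reducing to the same componentwise system $y=P_{\gamma\omega_{i}}f_{i}(x)$, $\omega_{i}=P_{\gamma}\phi^{*}(p_{i}+\gamma f_{i}(y))$, $\omega_{j}=p_{j}$, citing \cite[Remark~4.1 \& Example~4.1]{briceno2023perturbation} for the existence, uniqueness and case split in \eqref{def_omega_2}. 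You instead verify monotonicity by hand (your regrouping into $(p_i-p_i')(v-v')+p_iP-p_i'Q$ with $P\ge 0\ge Q$ is exactly the right computation, and correctly uses $p_i,p_i'\ge 0$ forced by $\dom N_{\R_+}=\R_+$), you carry out the scalar fixed-point analysis yourself via the monotonicity and continuity of $\lambda\mapsto f_{i}(P_{\lambda}f_{i}(x))$ together with the intermediate value theorem, and you then obtain maximality from Minty's theorem as a corollary of the everywhere-solvable resolvent equation. What the paper's route buys is brevity and consistency with the framework of \cite{briceno2023perturbation} that it uses elsewhere; what yours buys is a proof readable without that reference, and an explicit justification of the uniqueness claim in \eqref{def_omega_2} that the paper only asserts by citation. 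One small point to make explicit if you write this up: in the case $\omega_i=0$ you should note that $0\cdot\partial f_i(\xi)=\{0\}$ because $\partial f_i(\xi)\neq\emptyset$ (which holds since $\dom f_i=\H$), so that $\xi=x$ indeed solves the first component of the inclusion.
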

 \begin{proof}
\ref{prop_item1}: First, we prove that $B_{i}$ is monotone. Let $(u,v)\in B_{i}(x,p)$ and $(u',v')\in B_{i}(x',p')$. Then $u\in p_{i}\partial f_{i}(x)$, $u'\in p_{i}'\partial f_{i}(x')$, $v_{i}\in \partial\phi^{*}(p_{i})-f_{i}(x)$, and $v_{i}'\in \partial\phi^{*}(p_{i}')-f_{i}(x')$. In addition, $v_{j}=v_{j}'=0$ for all $j\neq i$. By \cite[Proposition~3.1(v)]{briceno2023perturbation}, we have that the following operator
\begin{align}
\label{op_kkt}
(x,\xi)\in\H\times\R\mapsto \xi\partial f_{i}(x)\times (\partial \phi^{*}(\xi)-f_{i}(x)) 
\end{align}
is maximally monotone. Thus,
\begin{align*}
\langle(u,v)-(u',v'),(x,p)-(x',p')\rangle=\langle u-u',x-x'\rangle+\langle v_{i}-v_{i}',p_{i}-p_{i}'\rangle\geq 0, 
\end{align*}
which proves that $B_{i}$ is monotone. By \cite[Theorem~21.1]{MR3616647}, it is enough to prove that $\ran (\Id+B_{i})=\H\times\R^{N}$. Let $(y,\omega)\in\H\times\R^{N}$. Then $(y,\omega_{i})\in\H\times\R$. Since the operator \eqref{op_kkt} is maximally monotone, then from \cite[Theorem~21.1]{MR3616647}, there exists $(x,\xi)\in\H\times\R$ such that
\begin{align*}
(y,\omega_{i})\in (x,\xi)+\begin{pmatrix}
\xi\partial f_{i}(x)\\ \partial\phi^{*}(\xi)-f_{i}(x)
\end{pmatrix}.
\end{align*}
Let $p\in\R^{N}$ defined by $p_{i}=\xi$ and $p_{j}=\omega_{j}$ for all $j\neq i$. Then
\begin{align*}
(y,\omega)\in (x,p)+\begin{pmatrix}
p_{i}\partial f_{i}(x)\\ \partial\phi^{*}(p_{i})e_{i}-f_{i}(x)e_{i}
\end{pmatrix}=(x,p)+B_{i}(x,p).
\end{align*}
That is, $\ran (\Id+B_{i})=\H\times\R^{N}$.
\ref{prop_item2}: Let $(y,\omega)\in\H\times \R^{N}$. Then
 \begin{align*}
(y,\omega)=J_{\gamma B_{i}}(x,p)&\quad\Leftrightarrow\quad (x,p)\in (y,\omega)+\gamma B_{i}(y,\omega)\\
&\quad\Leftrightarrow\quad \begin{cases}
x\in y+\gamma \omega_{i}\partial f_{i}(y)\\
p_{i}\in\omega_{i}+\gamma\partial\phi^{*}(\omega_{i})-\gamma f_{i}(y)\\
p_{j}=\omega_{j}\text{ for all }j\neq i
\end{cases}\\
&\quad\Leftrightarrow\quad\begin{cases}
y=P_{\gamma\omega_{i}}f_{i}(x)\\
\omega_{i}=P_{\gamma}\phi^{*}(p_{i}+\gamma f_{i}(y))\\
\omega_{j}=p_{j}\text{ for all }j\neq i
\end{cases}
 \end{align*}
 Using \cite[Remark~4.1 \& Example~4.1]{briceno2023perturbation} and that $\dom f_{i}=\H$, we deduce that
 \begin{align}
 \label{def_omega}
\omega_{i}=\begin{cases}
0 &\text{ if } p_{i}+\gamma f_{i}(x)\leq 0\\
p_{i}+\gamma f_{i}(P_{\gamma\omega_{i}}f_{i}(x)) &\text{ if }p_{i}+\gamma f_{i}(x)>0.
\end{cases}
 \end{align}
 Therefore,
 \begin{align*}
J_{\gamma B_{i}}(x,p)=\begin{cases}
(x,\omega)&\text{ if } p_{i}+\gamma f_{i}(x)\leq 0\\
(P_{\gamma\omega_{i}}f_{i}(x),\omega)&\text{ if }p_{i}+\gamma f_{i}(x)>0,
\end{cases}
 \end{align*}
 where $\omega_{i}\in\left[0,+\infty\right[$ is the unique solution to \eqref{def_omega} and $\omega_{j}=p_{j}$ for all $j\neq i$.
 \end{proof}
  In the case~\ref{item1}, $f_{i}(x)=\langle a_{i},x\rangle+\xi_{i}$. Then, $P_{\gamma \omega} f_{i}(x)=x-\gamma\omega a_{i}$ and thus $f_{i}(P_{\gamma \omega} f_{i}(x))=\langle a_{i},x-\gamma\omega a_{i}\rangle+\xi_{i}$. Hence
 \begin{align}
\omega=p_{i}+\gamma f_{i}(P_{\gamma\omega}f_{i}(x))&\Leftrightarrow \omega=p_{i}+\gamma\langle a_{i},x-\gamma\omega a_{i}\rangle+\gamma\xi_{i}\nonumber\\
&\Leftrightarrow \omega=\dfrac{p_{i}+\gamma(\langle a_{i},x\rangle+\xi_{i})}{1+\gamma^{2}\|a_{i}\|^{2}}\label{omega_1}.
 \end{align}
 It follows from \eqref{omega_1} that, in the case~\ref{item1}, the solution to \eqref{def_omega_2} reduces to
 \begin{align*}
\overline{\omega}=\begin{cases}
0 &\text{ if } p_{i}+\gamma(\langle a_{i},x\rangle+\xi_{i})\leq 0\\
\dfrac{p_{i}+\gamma(\langle a_{i},x\rangle+\xi_{i})}{1+\gamma^{2}\|a_{i}\|^{2}} &\text{ if }p_{i}+\gamma(\langle a_{i},x\rangle+\xi_{i})>0.
\end{cases}
 \end{align*}
 On the other hand, $J_{\gamma A_{1}}(x,p)=(\Proj_{\Q}(x),\Proj_{\mathcal{P}_{1}}(p))$, $J_{\gamma A_{2}}(x,p)=(x,\Proj_{\mathcal{P}_{2}}(p))$, and $J_{\gamma A_{3}}(x,p)=(x,\Proj_{\mathcal{P}_{3}}(p))$. Once calculated $J_{\gamma A_{1}}$, $J_{\gamma A_{2}}$, $J_{\gamma A_{3}}$, and $J_{\gamma B_{i}}$ for all $i\in\{1,\ldots,N\}$, we can apply the method proposed in \cite{aragon2023distributed} for solving problems~\eqref{inc_2}, \eqref{inc_2_2}, and \eqref{inc_2_3}. Define $\mathbf{H}=\mathcal{H}\times\R^{N}$. From \cite[Theorem~3]{aragon2023distributed}, we deduce the following results.
\begin{proposition}
\label{prop_aragon_1}
Let $\lambda\in\left]0,2\beta\right[$, let $\gamma\in\left]0,1-\frac{\lambda}{2\beta}\right[$, and let $\overline{\mathbf{z}}^{0}=((\overline{x}_{1}^{0},\overline{p}_{1}^{0}),\ldots,(\overline{x}_{N}^{0},\overline{p}_{N}^{0}))\in\mathbf{H}^{N}$. For every $k\in\mathbb{N}$, we consider the following routine.
\begin{equation}
\label{alg_aragon_1}
\left\lfloor
\begin{array}{ll}
  x_{1}^{k}:=\Proj_{\Q}(\overline{x}_{1}^{k})\textnormal{ and }p_{1}^{k}:=\Proj_{\mathcal{P}_{1}}(\overline{p}_{1}^{k})\\
(x_{2}^{k},p_{2}^{k}):=J_{\lambda B_{1}}(\overline{x}_{2}^{k}+x_{1}^{k}-\overline{x}_{1}^{k}-\lambda\nabla h(x_{1}^{k}),\overline{p}_{2}^{k}+p_{1}^{k}-\overline{p}_{1}^{k})\\
(x_{i}^{k},p_{i}^{k}):=J_{\lambda B_{i-1}}(\overline{x}_{i}^{k}+x_{i-1}^{k}-\overline{x}_{i-1}^{k},\overline{p}_{i}^{k}+p_{i-1}^{k}-\overline{p}_{i-1}^{k})\textnormal{ for all }i\in\{3,\ldots,N\}\\
(x_{N+1}^{k},p_{N+1}^{k}):=J_{\lambda B_{N}}(x_{1}^{k}+x_{N}^{k}-\overline{x}_{N}^{k},p_{1}^{k}+p_{N}^{k}-\overline{p}_{N}^{k})\\
\left.
\begin{array}{ll}
\overline{x}_{j}^{k+1}=\overline{x}_{j}^{k}+\gamma(x_{j+1}^{k}-x_{j}^{k})\\
\overline{p}_{j}^{k+1}=\overline{p}_{j}^{k}+\gamma(p_{j+1}^{k}-p_{j}^{k})
\end{array}
\right\}
\textnormal{ for all }j\in\{1,\ldots,N\}
\end{array}
\right.
\end{equation}
Then there exists $(x,p)\in\mathbf{H}$ solution to problem~\eqref{inc_2} such that $(x_{1}^{k},p_{1}^{k})\rightharpoonup (x,p)$.
\end{proposition}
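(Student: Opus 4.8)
The plan is to obtain Proposition~\ref{prop_aragon_1} as a direct instantiation of the distributed forward--backward splitting method of \cite[Theorem~3]{aragon2023distributed}, applied to the monotone inclusion \eqref{inc_2}. That theorem treats inclusions of the form $0\in\sum_{j}A_{j}(z)+\sum_{j}C_{j}(z)$, where each $A_{j}$ is maximally monotone, each $C_{j}$ is cocoercive, and the resolvents $J_{\lambda A_{j}}$ together with the forward evaluations of the $C_{j}$ are arranged along a communication structure; it guarantees weak convergence of the shadow sequence produced by the first resolvent to a zero of the sum, under suitable bounds on the step sizes. Accordingly, the bulk of the argument consists in checking that \eqref{inc_2} and the routine \eqref{alg_aragon_1} meet these hypotheses, after which the conclusion follows.

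First I would verify the operator-theoretic requirements. The operator $A_{1}=N_{\Q}\times N_{\mathcal{P}_{1}}$ is maximally monotone, being the normal cone of the nonempty closed convex set $\Q\times\mathcal{P}_{1}$, and its resolvent is the pair of projections $J_{\lambda A_{1}}=\Proj_{\Q}\times\Proj_{\mathcal{P}_{1}}$ (independent of $\lambda$), exactly as used in the first line of \eqref{alg_aragon_1}. Each $B_{i}$ is maximally monotone by Proposition~\ref{res_B_i}\ref{prop_item1}, and its resolvent $J_{\lambda B_{i}}$ is the computable map given in Proposition~\ref{res_B_i}\ref{prop_item2}, which is precisely what is evaluated in the remaining resolvent lines. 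For the forward term, since $h$ is convex with $\beta^{-1}$-Lipschitz gradient, the Baillon--Haddad theorem \cite{MR3616647} yields that $\nabla h$ is $\beta$-cocoercive; as $C(x,p)=(\nabla h(x),0)$ acts trivially on the $p$-component, $C$ is itself $\beta$-cocoercive on $\mathbf{H}$. Its association with the block following $A_{1}$ is reflected in the term $-\lambda\nabla h(x_{1}^{k})$ entering the $J_{\lambda B_{1}}$ update.

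Next I would match the quantitative hypotheses. The step-size conditions $\lambda\in\left]0,2\beta\right[$ and $\gamma\in\left]0,1-\tfrac{\lambda}{2\beta}\right[$ are exactly the admissible ranges required by \cite[Theorem~3]{aragon2023distributed} for a $\beta$-cocoercive forward operator, so no additional restriction is introduced. It also remains to guarantee that the solution set of \eqref{inc_2} is nonempty: through the equivalence \eqref{inc_1}$\Leftrightarrow$\eqref{inc_2} and the optimality characterization obtained from Fermat's rule and \cite[Corollary~16.50(iv)]{MR3616647}, this is equivalent to the existence of a minimizer of \eqref{prob_principal}, which holds under the standing assumptions on the data.

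With all hypotheses in place, \cite[Theorem~3]{aragon2023distributed} produces a zero $(x,p)$ of the sum in \eqref{inc_2} together with the weak convergence of the sequence generated by the first resolvent, namely $(x_{1}^{k},p_{1}^{k})\rightharpoonup(x,p)$; since $(x,p)$ solves \eqref{inc_2}, it also solves \eqref{inc_1}, and hence is a solution of \eqref{prob_principal}. The main obstacle I anticipate is bookkeeping rather than analysis: one must carefully align the indexing and the ring-type coupling of \eqref{alg_aragon_1}---in particular the wrap-around term $x_{1}^{k}$ entering the $J_{\lambda B_{N}}$ update and the placement of the forward step on the $B_{1}$ block---with the precise node labelling and the exact statement of \cite[Theorem~3]{aragon2023distributed}, so that the abstract convergence result transfers without gaps.
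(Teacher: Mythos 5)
Your proposal is correct and follows essentially the same route as the paper, which likewise obtains this result as a direct application of \cite[Theorem~3]{aragon2023distributed} to the inclusion \eqref{inc_2}, using the maximal monotonicity of the $B_i$ and the resolvent formulas from Proposition~\ref{res_B_i} together with $J_{\lambda A_1}=\Proj_{\Q}\times\Proj_{\mathcal{P}_1}$. The additional hypothesis checks you carry out (cocoercivity of $\nabla h$ via Baillon--Haddad, the step-size ranges, nonemptiness of the solution set) are exactly the ingredients the paper leaves implicit.
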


\begin{proposition}
\label{prop_aragon_2}
Let $\lambda\in\left]0,2\beta\right[$, $\gamma\in\left]0,1-\frac{\lambda}{2\beta}\right[$, and $\overline{\mathbf{z}}^{0}=((\overline{x}_{1}^{0},\overline{p}_{1}^{0}),\ldots,(\overline{x}_{N+1}^{0},\overline{p}_{N+1}^{0}))\in\mathbf{H}^{N+1}$. For every $k\in\mathbb{N}$, we consider the following routine.
\begin{equation}
\label{alg_aragon_2}
\left\lfloor
\begin{array}{ll}
  x_{1}^{k}:=\Proj_{\Q}(\overline{x}_{1}^{k})\textnormal{ and }p_{1}^{k}:=\Proj_{\mathcal{P}_{1}}(\overline{p}_{1}^{k})\\
(x_{2}^{k},p_{2}^{k}):=(\overline{x}_{2}^{k}+x_{1}^{k}-\overline{x}_{1}^{k}-\lambda\nabla h(x_{1}^{k}),\Proj_{\mathcal{P}}(\overline{p}_{2}^{k}+p_{1}^{k}-\overline{p}_{1}^{k}))\\
(x_{i}^{k},p_{i}^{k}):=J_{\lambda B_{i-2}}(\overline{x}_{i}^{k}+x_{i-1}^{k}-\overline{x}_{i-1}^{k},\overline{p}_{i}^{k}+p_{i-1}^{k}-\overline{p}_{i-1}^{k})\textnormal{ for all }i\in\{3,\ldots,N+1\}\\
(x_{N+2}^{k},p_{N+2}^{k}):=J_{\lambda B_{N}}(x_{1}^{k}+x_{N+1}^{k}-\overline{x}_{N+1}^{k},p_{1}^{k}+p_{N+1}^{k}-\overline{p}_{N+1}^{k})\\
\left.
\begin{array}{ll}
\overline{x}_{j}^{k+1}=\overline{x}_{j}^{k}+\gamma(x_{j+1}^{k}-x_{j}^{k})\\
\overline{p}_{j}^{k+1}=\overline{p}_{j}^{k}+\gamma(p_{j+1}^{k}-p_{j}^{k})
\end{array}
\right\}
\textnormal{ for all }j\in\{1,\ldots,N+1\}
\end{array}
\right.
\end{equation}
Then the following hold.
\begin{enumerate}
    \item If $\mathcal{P}=\mathcal{P}_{2}$, then there exists $(x,p)\in\mathbf{H}$ solution to problem~\eqref{inc_2_2} such that $(x_{1}^{k},p_{1}^{k})\rightharpoonup (x,p)$.
    \item If $\mathcal{P}=\mathcal{P}_{3}$, then there exists $(x,p)\in\mathbf{H}$ solution to problem~\eqref{inc_2_3} such that $(x_{1}^{k},p_{1}^{k})\rightharpoonup (x,p)$.
\end{enumerate}
\end{proposition}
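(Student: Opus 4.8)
The plan is to read problem~\eqref{inc_2_2} (resp.~\eqref{inc_2_3}) as an instance of the monotone inclusion $0\in\sum_{j}A_{j}+\sum_{j}C_{j}$ that is solved by the distributed forward-backward scheme of \cite[Theorem~3]{aragon2023distributed}, and then to verify that the routine~\eqref{alg_aragon_2} is nothing but the instantiation of that scheme for the operators at hand. Concretely, I would first lay out the $N+2$ maximally monotone operators $A_{1},A_{2},B_{1},\ldots,B_{N}$ (resp.\ $A_{1},A_{3},B_{1},\ldots,B_{N}$), together with the cocoercive part whose only nonzero block is $C_{1}=C=(\nabla h,0)^{\top}$, the remaining $C_{i}$ being zero; the zero padding up to $C_{N+1}$ is there only to reconcile the number of blocks.

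Next I would check the standing hypotheses of \cite[Theorem~3]{aragon2023distributed}. Maximal monotonicity of $A_{1},A_{2},A_{3}$ is immediate, since they are (products with the identity of) normal cones to the nonempty closed convex sets $\Q,\mathcal{P}_{1},\mathcal{P}_{2},\mathcal{P}_{3}$; maximal monotonicity of each $B_{i}$ is exactly Proposition~\ref{res_B_i}\ref{prop_item1}. For the forward part, since $h$ has $\beta^{-1}$-Lipschitz gradient, the Baillon--Haddad theorem gives that $\nabla h$ is $\beta$-cocoercive, so $C$ is $\beta$-cocoercive and the zero operators are trivially cocoercive; this is precisely what pins down the admissible ranges $\lambda\in\left]0,2\beta\right[$ and $\gamma\in\left]0,1-\frac{\lambda}{2\beta}\right[$ prescribed in the statement.

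I would then confirm that every resolvent in~\eqref{alg_aragon_2} matches these operators. The first block realizes $J_{\lambda A_{1}}=(\Proj_{\Q},\Proj_{\mathcal{P}_{1}})$; the second block applies the resolvent of $A_{2}$ (resp.\ $A_{3}$), which acts as the identity on the $x$-component and as $\Proj_{\mathcal{P}_{2}}$ (resp.\ $\Proj_{\mathcal{P}_{3}}$) on the $p$-component, while absorbing the single forward evaluation $-\lambda\nabla h(x_{1}^{k})$ of the cocoercive term; the remaining blocks use $J_{\lambda B_{i}}$ computed in Proposition~\ref{res_B_i}\ref{prop_item2}. I would also recall the equivalence between~\eqref{inc_2_2}/\eqref{inc_2_3} and the optimality condition~\eqref{inc_1}, already secured by decomposing $N_{\mathcal{P}}$ under the interior qualification conditions in~\ref{item1_2}/\ref{item1_3}, so that any zero of the sum yields a solution of~\eqref{prob_item4} and the conclusion of the theorem is nonvacuous.

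The main obstacle is bookkeeping rather than analysis: one must align the cyclic (``ring'') structure of~\eqref{alg_aragon_2}---where each block consumes the previous block's output and the last block wraps back to block~$1$---with the exact indexing of \cite[Theorem~3]{aragon2023distributed}, taking care that the cocoercive term is attached to the correct block, that the zero-padding operators $C_{i}$ reconcile the count of cocoercive and monotone blocks, and that block~$2$ faithfully reproduces the identity-on-$x$ behavior of the resolvent of $A_{2}$ (resp.\ $A_{3}$). Once this identification is in place, \cite[Theorem~3]{aragon2023distributed} applies verbatim and yields the weak convergence of the shadow sequence $(x_{1}^{k},p_{1}^{k})\rightharpoonup(x,p)$ to a solution of~\eqref{inc_2_2} (resp.~\eqref{inc_2_3}), exactly in parallel with the proof of Proposition~\ref{prop_aragon_1}.
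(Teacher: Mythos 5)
Your proposal is correct and follows essentially the same route as the paper, which likewise obtains this proposition by instantiating \cite[Theorem~3]{aragon2023distributed} with the operators $A_{1}$, $A_{2}$ (resp.\ $A_{3}$), $B_{1},\ldots,B_{N}$ and the cocoercive block $C_{1}=C$, using the resolvent formulas and the maximal monotonicity of the $B_{i}$ established in Proposition~\ref{res_B_i} and the normal-cone decomposition under the qualification conditions of \ref{item1_2}/\ref{item1_3}. The only difference is that you spell out the verification (Baillon--Haddad for the cocoercivity of $\nabla h$, the block-by-block matching of resolvents) in more detail than the paper, which states the result directly as a consequence of the cited theorem.
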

\begin{remark}
By \cite[Example~29.18]{MR3616647}, the projection onto $\mathcal{P}_{1}$ is given by
\begin{align*}
\Proj_{\mathcal{P}_{1}}(x)=x+\left(\frac{1-\sum_{i=1}^{N}x_{i}}{N}\right)\mathbf{1}\quad\text{for all }x\in\R^{N},
\end{align*}
where $\mathbf{1}=(1,\ldots,1)\in\R^{N}$.

On the other hand, note that $\mathcal{P}_{2}=\bigtimes_{i=1}^{N}\{p\in\R\,:\,p\leq q_{i}\}$. Then by \cite[Proposition~29.3]{MR3616647},
\begin{align*}
(\Proj_{\mathcal{P}_{2}}(x))_{i}=\begin{cases}
x_{i} & \text{if } x_{i}\leq q_{i}\\
q_{i} & \text{if } x_{i}>q_{i}
\end{cases}\quad\text{for all }i\in\{1,\ldots,N\}\text{ and }x\in\R^{N}.
\end{align*}
Finally, the projection onto $\mathcal{P}_{3}$ can be calculated using \cite[Example~29.21]{MR3616647}.
\end{remark}
\subsection{Forward-backward with subspaces}
 Let us see another method to solve the inclusions \eqref{inc_2}, \eqref{inc_2_2}, and \eqref{inc_2_3}. Consider the spaces $\mathsf{H}_{1}=(\H\times \R^{N})^{N+1}$ and $\mathsf{H}_{2}=(\H\times \R^{N})^{N+2}$. Let $\widetilde{A}_{1}\colon\mathsf{H}_{1}\rightarrow 2^{\mathsf{H}_{1}}$, $\widetilde{A}_{2}\colon\mathsf{H}_{2}\rightarrow 2^{\mathsf{H}_{2}}$, $\widetilde{A}_{3}\colon\mathsf{H}_{2}\rightarrow 2^{\mathsf{H}_{2}}$, $\widetilde{C}_{1}\colon\mathsf{H}_{1}\rightarrow\mathsf{H}_{1}$, and $\widetilde{C}_{2}\colon\mathsf{H}_{2}\rightarrow\mathsf{H}_{2}$ the operators defined by
 \begin{align*}
  \widetilde{A}_{1}&=B_{1}\times\cdots\times B_{N}\times A_{1}\\
  \widetilde{C}_{1}&=(C_{1},\underbrace{0,\ldots,0}_{N-\text{times}})\\
 \widetilde{A}_{2}&=B_{1}\times\cdots\times B_{N}\times A_{1}\times A_{2}\\
 \widetilde{A}_{3}&=B_{1}\times\cdots\times B_{N}\times A_{1}\times A_{3}\\
  \widetilde{C}_{2}&=(C_{1},\underbrace{0,\ldots,0}_{(N+1)-\text{times}}).
 \end{align*}
 Then the inclusion \eqref{inc_2} is equivalent to
 \begin{align}
\label{inc_3}
\text{Find }z\in \mathsf{H}_{1}\text{ such that }\quad 0\in \widetilde{A}_{1}(z)+\widetilde{C}_{1}(z)+N_{V_{1}}(z),
 \end{align}
 where $V_{1}=\{z\in \mathsf{H}_{1}\,:\,z_{1}=\ldots=z_{N+1}\}$. Indeed, we have that $V_{1}^{\perp}=\{z\in \mathsf{H}_{1}\,:\,\sum_{i=1}^{N+1}z_{i}=0\}=N_{V_{1}}(z)$ for all $z\in V_{1}$. Now, define $D_{i}=B_{i}$ for all $i\in\{1,\ldots,N\}$, $D_{N+1}=A_{1}$, and $C_{i}=0$ for all $i\in\{2,\ldots,N+1\}$. Thus, denoting $\mathbf{x}=(x,p)$, \eqref{inc_2} is equivalent to
 \begin{align*}
0\in \sum_{i=1}^{N+1}D_{i}(\mathbf{x})+C_{i}(\mathbf{x})&\Leftrightarrow \left(\exists (y_{i})_{i=1}^{N+1}\in\bigtimes_{i=1}^{N+1}D_{i}(\mathbf{x})\right)\quad 0=\sum_{i=1}^{N+1}(-y_{i}-C_{i}(\mathbf{x}))\\
&\Leftrightarrow \left(\exists (y_{i})_{i=1}^{N+1}\in\bigtimes_{i=1}^{N+1}D_{i}(\mathbf{x})\right)\quad -(y_{1},\ldots,y_{N+1})-\widetilde{C}_{1}(\mathbf{x},\ldots,\mathbf{x})\in V_{1}^{\perp}\\
&\Leftrightarrow 0\in \widetilde{A}_{1}(\mathbf{x},\ldots,\mathbf{x})+\widetilde{C}_{1}(\mathbf{x},\ldots,\mathbf{x})+N_{V_{1}}(\mathbf{x},\ldots,\mathbf{x}).
 \end{align*}
 Similarly, the inclusion \eqref{inc_2_2} is equivalent to
 \begin{align}
\label{inc_3_2}
\text{Find }z\in \mathsf{H}_{2}\text{ such that }\quad 0\in \widetilde{A}_{2}(z)+\widetilde{C}_{2}(z)+N_{V_{2}}(z),
 \end{align}
 where $V_{2}=\{z\in \mathsf{H}_{2}\,:\,z_{1}=\ldots=z_{N+2}\}$. Finally, the inclusion \eqref{inc_2_3} is equivalent to
 \begin{align}
\label{inc_3_3}
\text{Find }z\in \mathsf{H}_{2}\text{ such that }\quad 0\in \widetilde{A}_{3}(z)+\widetilde{C}_{2}(z)+N_{V_{2}}(z).
 \end{align}
 \begin{remark}
Note that the projection on $V_{1}$ is given by
\begin{align}
\label{proj_diag}
\Proj_{V_{1}}(z)=\left(\dfrac{1}{N+1}\sum_{j=1}^{N+1}z_{j}\right)_{i=1}^{N+1}\quad\text{for all }z\in\mathsf{H}_{1},
\end{align}
and the projection onto $V_{2}$ is similar.
 \end{remark}
 In order to solve \eqref{inc_3}, \eqref{inc_3_2}, and \eqref{inc_3_3}, we use the algorithm proposed in \cite[Corollary~5.3]{briceno2015forward} when $\lambda_{n}\equiv 1$ (which coincides with the algorithm proposed in \cite{briceno2023primal} in this case). From \cite[Corollary~5.3]{briceno2015forward}, \cite[Proposition~23.18]{MR3616647}, and \eqref{proj_diag}, we deduce the following results.
 \begin{proposition}
\label{prop_fb_sev_1}
Let $\gamma\in\left]0,2\beta\right[$, $z^{0}=((x_{1}^{0},p_{1}^{0}),\ldots,(x_{N+1}^{0},p_{N+1}^{0}))\in V_{1}$, and 

$\overline{z}^{0}=((\overline{x}_{1}^{0},\overline{p}_{1}^{0}),\ldots,(\overline{x}_{N+1}^{0},\overline{p}_{N+1}^{0}))\in V_{1}^{\perp}$. For every $k\in\mathbb{N}$, we consider the following routine.
\begin{equation}
\label{alg_fb_sev_1}
\left\lfloor
\begin{array}{ll}
(\widetilde{x}_{i}^{k},\widetilde{p}_{i}^{k}):=J_{\gamma B_{i}}\left(x_{i}^{k}+\gamma\overline{x}_{i}^{k}-\frac{\gamma}{N+1}\nabla h(x_{1}^{k}),p_{i}^{k}+\gamma\overline{p}_{i}^{k}\right)\textnormal{ for all }i\in\{1,\ldots,N\}\\
(\widetilde{x}_{N+1}^{k},\widetilde{p}_{N+1}^{k}):=\left(\Proj_{\Q}\left(x_{N+1}^{k}+\gamma\overline{x}_{N+1}^{k}-\frac{\gamma}{N+1}\nabla h(x_{1}^{k})\right),\Proj_{\mathcal{P}_{1}}(p_{N+1}^{k}+\gamma \overline{p}_{N+1}^{k})\right)\\
z_{i}^{k+1}:=(x_{i}^{k+1},p_{i}^{k+1})=\frac{1}{N+1}\sum_{j=1}^{N+1}(\widetilde{x}_{j}^{k},\widetilde{p}_{j}^{k})\textnormal{ for all }i\in\{1,\ldots,N+1\}\\
\overline{z}_{i}^{k+1}:=(\overline{x}_{i}^{k+1},\overline{p}_{i}^{k+1})=(\overline{x}_{i}^{k},\overline{p}_{i}^{k})+\frac{1}{\gamma}(x_{i}^{k+1}-\widetilde{x}_{i}^{k},p_{i}^{k+1}-\widetilde{p}_{i}^{k})\textnormal{ for all }i\in\{1,\ldots,N+1\}.
\end{array}
\right.
\end{equation}
Then there exists $z\in V_{1}$ solution to problem~\eqref{inc_3} such that $z^{k}\rightharpoonup z$.
\end{proposition}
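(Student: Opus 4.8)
The plan is to recognize the routine \eqref{alg_fb_sev_1} as the forward--partial-inverse iteration of \cite[Corollary~5.3]{briceno2015forward} (with $\lambda_n\equiv 1$) applied to the inclusion \eqref{inc_3}, and then to make every abstract ingredient explicit. First I would verify the hypotheses of that corollary for the triple $(\widetilde{A}_1,\widetilde{C}_1,V_1)$. The operator $\widetilde{A}_1=B_1\times\cdots\times B_N\times A_1$ is maximally monotone because each $B_i$ is maximally monotone by Proposition~\ref{res_B_i}\ref{prop_item1}, $A_1=N_\Q\times N_{\mathcal{P}_1}$ is the normal cone to the nonempty closed convex set $\Q\times\mathcal{P}_1$, and a Cartesian product of maximally monotone operators is maximally monotone. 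The operator $\widetilde{C}_1=(C_1,0,\ldots,0)$ is $\beta$-cocoercive: since $\nabla h$ is $\beta^{-1}$-Lipschitz, the Baillon--Haddad theorem makes $C_1\colon(x,p)\mapsto(\nabla h(x),0)$ $\beta$-cocoercive, and appending zero blocks (which also ignore the remaining input coordinates) preserves the constant. Finally $V_1$ is a closed vector subspace, and the zero set of \eqref{inc_3} is nonempty, being in bijection with the solutions of \eqref{inc_2}, i.e. with the minimizers of \eqref{prob_principal}.

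The second step is to reconcile the forward term with the step-size window $\gamma\in\left]0,2\beta\right[$. Every solution of \eqref{inc_3} lies in $V_1$, where $N_{V_1}(z)=V_1^{\perp}$; there $\widetilde{C}_1 z$ and $\Proj_{V_1}\widetilde{C}_1\Proj_{V_1}z$ differ only by the element $\Proj_{V_1^{\perp}}\widetilde{C}_1z\in V_1^{\perp}$, which the normal-cone term absorbs, so \eqref{inc_3} is unchanged if $\widetilde{C}_1$ is replaced by $\Proj_{V_1}\widetilde{C}_1\Proj_{V_1}$. The latter is again $\beta$-cocoercive on all of $\mathsf{H}_1$, since pre- and post-composition with the self-adjoint nonexpansive projection cannot worsen the constant; this is exactly what legitimizes the bound $\gamma<2\beta$. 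Evaluating this projected gradient at $z^k\in V_1$ via the averaging formula \eqref{proj_diag} gives $\tfrac{1}{N+1}(\nabla h(x_1^k),0)$ in every block, which is precisely the forward term $-\tfrac{\gamma}{N+1}\nabla h(x_1^k)$ (with no contribution in the $p$-coordinate) appearing in the first two lines of \eqref{alg_fb_sev_1}.

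The third step renders the backward and projection steps explicit. By \cite[Proposition~23.18]{MR3616647} the resolvent of the Cartesian product splits, $J_{\gamma\widetilde{A}_1}=J_{\gamma B_1}\times\cdots\times J_{\gamma B_N}\times J_{\gamma A_1}$, and since $A_1=N_\Q\times N_{\mathcal{P}_1}$ one has $J_{\gamma A_1}=(\Proj_\Q,\Proj_{\mathcal{P}_1})$; this yields the block-wise resolvent lines of \eqref{alg_fb_sev_1}. The update $z^{k+1}=\Proj_{V_1}\widetilde{z}^k$ is the averaging \eqref{proj_diag}, and the dual step $\overline{z}^{k+1}=\overline{z}^k+\gamma^{-1}(z^{k+1}-\widetilde{z}^k)$ is the partial-inverse update; since $z^{k+1}-\widetilde{z}^k=-\Proj_{V_1^{\perp}}\widetilde{z}^k\in V_1^{\perp}$, the property $\overline{z}^k\in V_1^{\perp}$ is preserved along the iteration, matching the initialization. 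With all ingredients identified, \cite[Corollary~5.3]{briceno2015forward} delivers a solution $z\in V_1$ of \eqref{inc_3} with $z^k\rightharpoonup z$, and the equivalence \eqref{inc_2}$\Leftrightarrow$\eqref{inc_3} transfers this conclusion to the stated claim.

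I expect the main obstacle to be the second step: justifying that replacing $\widetilde{C}_1$ by its doubly projected version leaves \eqref{inc_3} invariant while upgrading the effective cocoercivity constant to $\beta$ on the whole space (rather than the larger constant $\beta(N+1)$ one would read off on $V_1$ alone). This is the point that simultaneously explains the step-size range $\left]0,2\beta\right[$ and the precise appearance of the factor $\tfrac{1}{N+1}$ in the forward term, so it must be argued with care rather than treated as a routine substitution.
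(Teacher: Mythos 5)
Your proposal is correct and follows essentially the same route as the paper, which likewise obtains this proposition by invoking \cite[Corollary~5.3]{briceno2015forward} (with $\lambda_n\equiv 1$) for the triple $(\widetilde{A}_1,\widetilde{C}_1,N_{V_1})$, splitting the resolvent of the product operator via \cite[Proposition~23.18]{MR3616647}, and using the averaging formula \eqref{proj_diag}. The paper states the deduction without writing out the verifications; your additional checks (maximal monotonicity of $\widetilde{A}_1$, $\beta$-cocoercivity of $\widetilde{C}_1$ via Baillon--Haddad, and the origin of the factor $\tfrac{1}{N+1}$ in the forward term) are accurate and simply make that citation explicit.
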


\begin{proposition}
\label{prop_fb_sev_2}
Let $\gamma\in\left]0,2\beta\right[$, $z^{0}=((x_{1}^{0},p_{1}^{0}),\ldots,(x_{N+2}^{0},p_{N+2}^{0}))\in V_{2}$, and 

$\overline{z}^{0}=((\overline{x}_{1}^{0},\overline{p}_{1}^{0}),\ldots,(\overline{x}_{N+2}^{0},\overline{p}_{N+2}^{0}))\in V_{2}^{\perp}$. For every $k\in\mathbb{N}$, we consider the following routine.
\begin{equation}
\label{alg_fb_sev_2}
\left\lfloor
\begin{array}{ll}
(\widetilde{x}_{i}^{k},\widetilde{p}_{i}^{k}):=J_{\gamma B_{i}}\left(x_{i}^{k}+\gamma\overline{x}_{i}^{k}-\frac{\gamma}{N+2}\nabla h(x_{1}^{k}),p_{i}^{k}+\gamma\overline{p}_{i}^{k}\right)\textnormal{ for all }i\in\{1,\ldots,N\}\\
(\widetilde{x}_{N+1}^{k},\widetilde{p}_{N+1}^{k}):=\left(\Proj_{\Q}\left(x_{N+1}^{k}+\gamma\overline{x}_{N+1}^{k}-\frac{\gamma}{N+2}\nabla h(x_{1}^{k})\right),\Proj_{\mathcal{P}_{1}}(p_{N+1}^{k}+\gamma \overline{p}_{N+1}^{k})\right)\\
(\widetilde{x}_{N+2}^{k},\widetilde{p}_{N+2}^{k}):=\left(x_{N+2}^{k}+\gamma\overline{x}_{N+2}^{k}-\frac{\gamma}{N+2}\nabla h(x_{1}^{k}),\Proj_{\mathcal{P}}(p_{N+2}^{k}+\gamma \overline{p}_{N+2}^{k})\right)\\
z_{i}^{k+1}:=(x_{i}^{k+1},p_{i}^{k+1})=\frac{1}{N+2}\sum_{j=1}^{N+2}(\widetilde{x}_{j}^{k},\widetilde{p}_{j}^{k})\textnormal{ for all }i\in\{1,\ldots,N+2\}\\
\overline{z}_{i}^{k+1}:=(\overline{x}_{i}^{k+1},\overline{p}_{i}^{k+1})=(\overline{x}_{i}^{k},\overline{p}_{i}^{k})+\frac{1}{\gamma}(x_{i}^{k+1}-\widetilde{x}_{i}^{k},p_{i}^{k+1}-\widetilde{p}_{i}^{k})\textnormal{ for all }i\in\{1,\ldots,N+2\}.
\end{array}
\right.
\end{equation}
Then the following hold.
\begin{enumerate}
    \item If $\mathcal{P}=\mathcal{P}_{2}$, then there exists $z\in V_{2}$ solution to problem~\eqref{inc_3_2} such that $z^{k}\rightharpoonup z$.
    \item If $\mathcal{P}=\mathcal{P}_{3}$, then there exists $z\in V_{2}$ solution to problem~\eqref{inc_3_3} such that $z^{k}\rightharpoonup z$.
\end{enumerate}
\end{proposition}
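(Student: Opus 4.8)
The plan is to read off \eqref{alg_fb_sev_2} as the forward--backward method with a closed vector subspace of \cite[Corollary~5.3]{briceno2015forward} (taking $\lambda_n\equiv1$) applied to \eqref{inc_3_2} and to \eqref{inc_3_3}; the two assertions then reduce to checking the hypotheses of that corollary for the triples $(\widetilde{A}_2,\widetilde{C}_2,V_2)$ and $(\widetilde{A}_3,\widetilde{C}_2,V_2)$ and to matching the generic iteration with the stated one. First I would verify the structural properties. Maximal monotonicity of $\widetilde{A}_2$ and $\widetilde{A}_3$ follows because each $B_i$ is maximally monotone by Proposition~\ref{res_B_i}\ref{prop_item1}, while $A_1$, $A_2$, and $A_3$ are normal cones to nonempty closed convex sets and hence maximally monotone, and a finite product of maximally monotone operators is maximally monotone. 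For the cocoercive part, the only nonzero block of $\widetilde{C}_2$ is $C_1=(\nabla h,0)$; since $h$ is convex with $\beta^{-1}$-Lipschitz gradient, the Baillon--Haddad theorem yields that $\nabla h$ is $\beta$-cocoercive, and appending zero blocks preserves this, so $\widetilde{C}_2$ is $\beta$-cocoercive on $\mathsf{H}_2$. Finally, $N_{V_2}$ is the normal cone of a closed subspace, hence maximally monotone with $J_{\gamma N_{V_2}}=\Proj_{V_2}$ given by the analogue of \eqref{proj_diag}.

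Next I would translate the abstract recursion into \eqref{alg_fb_sev_2}. By \cite[Proposition~23.18]{MR3616647}, the resolvent of the product operator factorizes, $J_{\gamma\widetilde{A}_2}=J_{\gamma B_1}\times\cdots\times J_{\gamma B_N}\times J_{\gamma A_1}\times J_{\gamma A_2}$, and substituting the explicit forms $J_{\gamma A_1}(x,p)=(\Proj_{\Q}(x),\Proj_{\mathcal{P}_1}(p))$, $J_{\gamma A_2}(x,p)=(x,\Proj_{\mathcal{P}_2}(p))$ (respectively $J_{\gamma A_3}(x,p)=(x,\Proj_{\mathcal{P}_3}(p))$) recovers the first $N+2$ lines of \eqref{alg_fb_sev_2}. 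The averaging update $z^{k+1}=\frac{1}{N+2}\sum_{j}(\widetilde{x}_j^k,\widetilde{p}_j^k)$ together with the dual update in $V_2^{\perp}$ comes from $\Proj_{V_2}$ via the analogue of \eqref{proj_diag}. The one point requiring care is the forward term: the method evaluates $\widetilde{C}_2$ at the current iterate $z^k\in V_2$ and projects the outcome onto $V_2$; since $z^k$ has all $N+2$ blocks equal with common first coordinate $x_1^k$, one has $\widetilde{C}_2z^k=((\nabla h(x_1^k),0),0,\ldots,0)$, so $\Proj_{V_2}(\widetilde{C}_2z^k)$ equals $\tfrac{1}{N+2}(\nabla h(x_1^k),0)$ in every block. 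This is exactly the term $-\tfrac{\gamma}{N+2}\nabla h(x_1^k)$ appearing in the $x$-coordinate of each line of \eqref{alg_fb_sev_2}, while the $p$-coordinates carry no forward term because the second block of $\widetilde{C}_2$ vanishes.

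To close, I would invoke the convergence conclusion of \cite[Corollary~5.3]{briceno2015forward}. The restriction of $\Proj_{V_2}\widetilde{C}_2$ to $V_2$ is again $\beta$-cocoercive: for $z,z'\in V_2$, since $z-z'\in V_2$ and $\Proj_{V_2}$ is a nonexpansive self-adjoint projection, $\langle\Proj_{V_2}\widetilde{C}_2z-\Proj_{V_2}\widetilde{C}_2z',z-z'\rangle=\langle\widetilde{C}_2z-\widetilde{C}_2z',z-z'\rangle\geq\beta\|\Proj_{V_2}(\widetilde{C}_2z-\widetilde{C}_2z')\|^2$. Hence, for $\gamma\in\left]0,2\beta\right[$ and assuming a solution of \eqref{inc_3_2} exists (equivalent, through the reformulation established earlier, to a solution of \eqref{inc_2_2}; and likewise for \eqref{inc_3_3} and \eqref{inc_2_3}), the sequence $z^k$ converges weakly to a solution $z\in V_2$. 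Choosing $A_2$ with $\mathcal{P}=\mathcal{P}_2$ gives the first case and $A_3$ with $\mathcal{P}=\mathcal{P}_3$ the second. I expect the main obstacle to be bookkeeping rather than conceptual: faithfully matching the generic forward--backward-with-subspace update to the blockwise form \eqref{alg_fb_sev_2}, and in particular confirming that the averaging factor $\tfrac{1}{N+2}$ in front of $\nabla h(x_1^k)$ is produced precisely by projecting the single-block forward term $\widetilde{C}_2z^k$ onto the diagonal $V_2$, all while keeping the step-size window $\left]0,2\beta\right[$ consistent with the cocoercivity constant.
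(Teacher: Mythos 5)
Your proposal is correct and follows essentially the same route as the paper, which deduces this proposition directly from \cite[Corollary~5.3]{briceno2015forward} with $\lambda_n\equiv 1$, the product-resolvent formula \cite[Proposition~23.18]{MR3616647}, and the projection formula \eqref{proj_diag}. The details you supply (maximal monotonicity of the product operators, $\beta$-cocoercivity of $\widetilde{C}_2$ via Baillon--Haddad, and the identification of the $\tfrac{1}{N+2}$ factor with $\Proj_{V_2}$ applied to the single-block forward term) are exactly the verifications the paper leaves implicit.
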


\subsection{Davis-Yin's Formulation}
Another method for solving the problems~\eqref{inc_3}, \eqref{inc_3_2}, and \eqref{inc_3_3} (and therefore the inclusions \eqref{inc_2}, \eqref{inc_2_2}, and \eqref{inc_2_3}) is the algorithm proposed in \cite{davis2017three} since the normal cone of a vector subspace is a maximally monotone operator. From \cite[Theorem~2.1]{davis2017three}, \cite[Proposition~23.18]{MR3616647}, and \eqref{proj_diag}, we obtain the following results.
\begin{proposition}
\label{prop_dy_1}
Let $\gamma\in\left]0,2\beta\right[$ and $z^{0}=((x_{1}^{0},p_{1}^{0}),\ldots,(x_{N+1}^{0},p_{N+1}^{0}))\in \mathsf{H}_{1}$. For every $k\in\mathbb{N}$, we consider the following routine.
\begin{equation}
\label{alg_dy_1}
\left\lfloor
\begin{array}{ll}
\overline{z}_{i}^{k}:=(\overline{x}_{i}^{k},\overline{p}_{i}^{k})=\frac{1}{N+1}\sum_{j=1}^{N+1}(x_{j}^{k},p_{j}^{k})\textnormal{ for all }i\in\{1,\ldots,N+1\}\\
(\widetilde{x}_{1}^{k},\widetilde{p}_{1}^{k})=J_{\gamma B_{1}}(2\overline{x}_{1}^{k}-x_{1}^{k}-\gamma \nabla h(\overline{x}_{1}^{k}),2\overline{p}_{1}^{k}-p_{1}^{k})\\
(\widetilde{x}_{i}^{k},\widetilde{p}_{i}^{k})=J_{\gamma B_{i}}(2\overline{x}_{i}^{k}-x_{i}^{k},2\overline{p}_{i}^{k}-p_{i}^{k})\quad\textnormal{ for all }i\in\{2,\ldots,N\}\\
(\widetilde{x}_{N+1}^{k},\widetilde{p}_{N+1}^{k})=(\Proj_{\Q}(2\overline{x}_{N+1}^{k}-x_{N+1}^{k}),\Proj_{\mathcal{P}_{1}}(2\overline{p}_{N+1}^{k}-p_{N+1}^{k}))\\
(x_{i}^{k+1},p_{i}^{k+1})=(x_{i}^{k}+\widetilde{x}_{i}^{k}-\overline{x}_{i}^{k},p_{i}^{k}+\widetilde{p}_{i}^{k}-\overline{p}_{i}^{k})\quad\textnormal{ for all }i\in\{1,\ldots,N+1\}.
\end{array}
\right.
\end{equation}
Then there exists $z\in \mathsf{H}_{1}$ solution to problem~\eqref{inc_3} such that $\overline{z}^{k}\rightharpoonup z$.
\end{proposition}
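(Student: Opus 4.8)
The plan is to recognize the routine \eqref{alg_dy_1} as a direct instance of the Davis--Yin three-operator splitting \cite[Theorem~2.1]{davis2017three} applied to the inclusion \eqref{inc_3}, with the maximally monotone operator $\mathsf{A}=\widetilde{A}_{1}$, the maximally monotone operator $\mathsf{B}=N_{V_{1}}$ whose resolvent is taken first, and the cocoercive forward operator $\mathsf{C}=\widetilde{C}_{1}$. Once this identification is made, weak convergence of the shadow sequence $\overline{z}^{k}=J_{\gamma N_{V_{1}}}(z^{k})$ to a solution of \eqref{inc_3} is exactly the conclusion of that theorem, so the bulk of the work is verifying the hypotheses and matching the iteration line by line.

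First I would check the standing assumptions of \cite[Theorem~2.1]{davis2017three}. The operator $\widetilde{A}_{1}=B_{1}\times\cdots\times B_{N}\times A_{1}$ is maximally monotone as a direct product of maximally monotone operators: each $B_{i}$ is maximally monotone by Proposition~\ref{res_B_i}\ref{prop_item1}, and $A_{1}=N_{\Q}\times N_{\mathcal{P}_{1}}$ is maximally monotone as a product of normal cones to nonempty closed convex sets. The normal cone $N_{V_{1}}$ to the closed subspace $V_{1}$ is maximally monotone. Finally, since $\nabla h$ is $\beta^{-1}$-Lipschitz, the Baillon--Haddad theorem gives that $\nabla h$ is $\beta$-cocoercive; as $\widetilde{C}_{1}=(C_{1},0,\ldots,0)$ with $C_{1}(x,p)=(\nabla h(x),0)$, the quantity $\langle\widetilde{C}_{1}z-\widetilde{C}_{1}z',z-z'\rangle$ reduces to $\langle\nabla h(x_{1})-\nabla h(x_{1}'),x_{1}-x_{1}'\rangle$ while $\|\widetilde{C}_{1}z-\widetilde{C}_{1}z'\|=\|\nabla h(x_{1})-\nabla h(x_{1}')\|$, so $\widetilde{C}_{1}$ is again $\beta$-cocoercive. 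This is precisely what makes the admissible step-size range $\gamma\in\left]0,2\beta\right[$ the correct one.

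Next I would match each line of \eqref{alg_dy_1} with the splitting scheme. By \cite[Proposition~23.18]{MR3616647} the resolvent $J_{\gamma N_{V_{1}}}$ equals $\Proj_{V_{1}}$, and by \eqref{proj_diag} this projection is the componentwise average; hence the first line produces $\overline{z}^{k}=J_{\gamma N_{V_{1}}}(z^{k})$. The following block computes $J_{\gamma\widetilde{A}_{1}}\bigl(2\overline{z}^{k}-z^{k}-\gamma\widetilde{C}_{1}(\overline{z}^{k})\bigr)$: since $\widetilde{A}_{1}$ is a product, its resolvent acts coordinatewise, giving $J_{\gamma B_{i}}$ on the first $N$ blocks (with the forward term $\gamma\nabla h$ entering only the first block, because $\widetilde{C}_{1}$ vanishes elsewhere) and $J_{\gamma A_{1}}=(\Proj_{\Q},\Proj_{\mathcal{P}_{1}})$ on the last block. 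The final update $z_{i}^{k+1}=z_{i}^{k}+\widetilde{z}_{i}^{k}-\overline{z}_{i}^{k}$ is exactly $z^{k+1}=z^{k}-J_{\gamma N_{V_{1}}}(z^{k})+J_{\gamma\widetilde{A}_{1}}(\cdots)$, i.e. one application of the Davis--Yin operator.

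With these identifications, and using that the solution set of \eqref{inc_3} is nonempty (equivalently, that \eqref{inc_2}, hence \eqref{prob_principal}, admits a solution), \cite[Theorem~2.1]{davis2017three} yields a fixed point $z^{\star}$ with $z^{k}\rightharpoonup z^{\star}$ and, crucially, $J_{\gamma N_{V_{1}}}(z^{k})\rightharpoonup z$ for some $z\in\operatorname{zer}(\widetilde{A}_{1}+\widetilde{C}_{1}+N_{V_{1}})$; such a $z$ solves \eqref{inc_3}, so $\overline{z}^{k}=J_{\gamma N_{V_{1}}}(z^{k})\rightharpoonup z$ is the claimed conclusion. I expect the only genuinely delicate points—as opposed to routine bookkeeping—to be confirming that the coordinatewise structure of $J_{\gamma\widetilde{A}_{1}}$ reproduces the three distinct update rules appearing in \eqref{alg_dy_1}, and that the cocoercivity constant of $\widetilde{C}_{1}$ is exactly $\beta$, so that the hypothesis $\gamma<2\beta$ is indeed the sharp one; everything else transfers directly from the cited convergence theorem.
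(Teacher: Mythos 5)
Your proposal is correct and follows exactly the route the paper intends: the paper states this proposition as a direct consequence of \cite[Theorem~2.1]{davis2017three}, \cite[Proposition~23.18]{MR3616647}, and \eqref{proj_diag}, and your identification of $\mathsf{B}=N_{V_{1}}$ (resolvent applied first, equal to $\Proj_{V_{1}}$), $\mathsf{A}=\widetilde{A}_{1}$ (resolvent acting coordinatewise), and the $\beta$-cocoercive $\mathsf{C}=\widetilde{C}_{1}$ is precisely that argument, with the hypothesis verification spelled out. No gaps; your version is simply more detailed than the paper's.
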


\begin{proposition}
\label{prop_dy_2}
Let $\gamma\in\left]0,2\beta\right[$ and $z^{0}=((x_{1}^{0},p_{1}^{0}),\ldots,(x_{N+2}^{0},p_{N+2}^{0}))\in \mathsf{H}_{2}$. For every $k\in\mathbb{N}$, we consider the following routine.
\begin{equation}
\label{alg_dy_2}
\left\lfloor
\begin{array}{ll}
\overline{z}_{i}^{k}:=(\overline{x}_{i}^{k},\overline{p}_{i}^{k})=\frac{1}{N+2}\sum_{j=1}^{N+2}(x_{j}^{k},p_{j}^{k})\textnormal{ for all }i\in\{1,\ldots,N+2\}\\
(\widetilde{x}_{1}^{k},\widetilde{p}_{1}^{k})=J_{\gamma B_{1}}(2\overline{x}_{1}^{k}-x_{1}^{k}-\gamma \nabla h(\overline{x}_{1}^{k}),2\overline{p}_{1}^{k}-p_{1}^{k})\\
(\widetilde{x}_{i}^{k},\widetilde{p}_{i}^{k})=J_{\gamma B_{i}}(2\overline{x}_{i}^{k}-x_{i}^{k},2\overline{p}_{i}^{k}-p_{i}^{k})\quad\textnormal{ for all }i\in\{2,\ldots,N\}\\
(\widetilde{x}_{N+1}^{k},\widetilde{p}_{N+1}^{k})=(\Proj_{\Q}(2\overline{x}_{N+1}^{k}-x_{N+1}^{k}),\Proj_{\mathcal{P}_{1}}(2\overline{p}_{N+1}^{k}-p_{N+1}^{k}))\\
(\widetilde{x}_{N+2}^{k},\widetilde{p}_{N+2}^{k})=(2\overline{x}_{N+2}^{k}-x_{N+2}^{k},\Proj_{\mathcal{P}}(2\overline{p}_{N+2}^{k}-p_{N+2}^{k}))\\
(x_{i}^{k+1},p_{i}^{k+1})=(x_{i}^{k}+\widetilde{x}_{i}^{k}-\overline{x}_{i}^{k},p_{i}^{k}+\widetilde{p}_{i}^{k}-\overline{p}_{i}^{k})\quad\textnormal{ for all }i\in\{1,\ldots,N+2\}.
\end{array}
\right.
\end{equation}
Then the following hold.
\begin{enumerate}
\item If $\mathcal{P}=\mathcal{P}_{2}$, then there exists $z\in \mathsf{H}_{2}$ solution to problem~\eqref{inc_3_2} such that $\overline{z}^{k}\rightharpoonup z$.
\item If $\mathcal{P}=\mathcal{P}_{3}$, then there exists $z\in \mathsf{H}_{2}$ solution to problem~\eqref{inc_3_3} such that $\overline{z}^{k}\rightharpoonup z$.
\end{enumerate}
\end{proposition}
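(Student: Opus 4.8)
The plan is to recognize algorithm~\eqref{alg_dy_2} as an instance of the Davis--Yin three-operator splitting scheme of \cite[Theorem~2.1]{davis2017three} applied to the inclusion~\eqref{inc_3_2} (resp.~\eqref{inc_3_3}) written as $0\in \mathsf{A}(z)+\mathsf{B}(z)+\mathsf{C}(z)$, where I would set $\mathsf{A}=\widetilde{A}_{2}$ (resp.~$\widetilde{A}_{3}$), $\mathsf{B}=N_{V_{2}}$, and $\mathsf{C}=\widetilde{C}_{2}$. First I would verify the standing hypotheses of that theorem. The operator $\mathsf{B}=N_{V_{2}}$ is maximally monotone as the normal cone to a closed vector subspace, and by \cite[Proposition~23.18]{MR3616647} its resolvent is $J_{\gamma N_{V_{2}}}=\Proj_{V_{2}}$, given by the averaging formula analogous to~\eqref{proj_diag}. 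The operator $\mathsf{A}$ is a separable product of $B_{1},\ldots,B_{N}$, $A_{1}$, and $A_{2}$ (resp.~$A_{3}$); each $B_{i}$ is maximally monotone by Proposition~\ref{res_B_i}\ref{prop_item1}, while $A_{1}$ and $A_{2}$ (resp.~$A_{3}$) are maximally monotone as direct sums of normal cones, so $\mathsf{A}$ is maximally monotone because a product of maximally monotone operators is maximally monotone by \cite[Proposition~20.23]{MR3616647}.

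The remaining hypothesis is the cocoercivity of $\mathsf{C}=\widetilde{C}_{2}$. Since $h$ has $\beta^{-1}$-Lipschitz gradient, the Baillon--Haddad theorem \cite[Corollary~18.17]{MR3616647} gives that $\nabla h$ is $\beta$-cocoercive, hence so is the single-block operator $C_{1}=(\nabla h,0)$ on $\H\times\R^{N}$. Because $\widetilde{C}_{2}=(C_{1},0,\ldots,0)$ acts nontrivially only on the first factor of $\mathsf{H}_{2}$ and the zero operator is cocoercive with any constant, a direct computation shows that $\widetilde{C}_{2}$ is again $\beta$-cocoercive on $\mathsf{H}_{2}$; this is what legitimizes the stepsize range $\gamma\in\left]0,2\beta\right[$ required by \cite[Theorem~2.1]{davis2017three}.

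With the hypotheses in hand, I would translate the abstract iteration $z_{\mathsf{B}}^{k}=J_{\gamma\mathsf{B}}(z^{k})$, $z_{\mathsf{A}}^{k}=J_{\gamma\mathsf{A}}(2z_{\mathsf{B}}^{k}-z^{k}-\gamma\mathsf{C}(z_{\mathsf{B}}^{k}))$, $z^{k+1}=z^{k}+z_{\mathsf{A}}^{k}-z_{\mathsf{B}}^{k}$ into the componentwise recursion~\eqref{alg_dy_2}. The first line is exactly $\overline{z}^{k}=\Proj_{V_{2}}(z^{k})$. Since $\mathsf{C}(\overline{z}^{k})$ is supported on the first block with value $(\nabla h(\overline{x}_{1}^{k}),0)$, the forward term contributes only the summand $-\gamma\nabla h(\overline{x}_{1}^{k})$ in the $B_{1}$-line and nothing elsewhere; applying $J_{\gamma\mathsf{A}}$ block-by-block then reproduces the lines involving $J_{\gamma B_{i}}$ (evaluated via Proposition~\ref{res_B_i}\ref{prop_item2}), $J_{\gamma A_{1}}=(\Proj_{\Q},\Proj_{\mathcal{P}_{1}})$, and $J_{\gamma A_{2}}=(\Id,\Proj_{\mathcal{P}_{2}})$ (resp.~$J_{\gamma A_{3}}=(\Id,\Proj_{\mathcal{P}_{3}})$), all evaluated at the reflected points $2\overline{z}_{i}^{k}-z_{i}^{k}$. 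The final line is the Davis--Yin correction $z_{i}^{k+1}=z_{i}^{k}+\widetilde{z}_{i}^{k}-\overline{z}_{i}^{k}$. I expect the main obstacle to be precisely this bookkeeping: one must check that $N_{V_{2}}$ is the operator whose resolvent is taken \emph{first}, so that $\nabla h$ is correctly evaluated at the averaged point $\overline{x}_{1}^{k}$ rather than at a block iterate, and that the off-diagonal zeros of $\widetilde{C}_{2}$ place the gradient step in the $B_{1}$-line alone.

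Finally, \cite[Theorem~2.1]{davis2017three} guarantees that the shadow sequence $z_{\mathsf{B}}^{k}=\overline{z}^{k}$ converges weakly to a point $z\in \mathsf{H}_{2}$ solving the corresponding inclusion. Since a solution of~\eqref{inc_3_2} (resp.~\eqref{inc_3_3}) necessarily lies in $V_{2}$, and hence has the form $z=(\mathbf{x},\ldots,\mathbf{x})$ with $\mathbf{x}=(x,p)$ solving~\eqref{inc_2_2} (resp.~\eqref{inc_2_3}), this yields the two claimed weak-convergence statements.
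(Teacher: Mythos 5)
Your proposal is correct and follows exactly the route the paper takes: the paper's proof is simply the citation of \cite[Theorem~2.1]{davis2017three} together with \cite[Proposition~23.18]{MR3616647} and the projection formula \eqref{proj_diag}, identifying \eqref{alg_dy_2} as Davis--Yin splitting with $\mathsf{B}=N_{V_{2}}$ (resolvent taken first), $\mathsf{A}=\widetilde{A}_{2}$ or $\widetilde{A}_{3}$, and $\mathsf{C}=\widetilde{C}_{2}$. Your additional verifications (maximal monotonicity of the product operator via Proposition~\ref{res_B_i}, $\beta$-cocoercivity of $\widetilde{C}_{2}$ via Baillon--Haddad, and the block-by-block bookkeeping) are all sound and merely make explicit what the paper leaves implicit.
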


\subsection{Dual method}
\label{dual_method}
In this section, we use the techniques proposed in \cite[Section~4.2]{sun2021robust} for solving problem~\eqref{prob_principal} for the ambiguity sets considered in the cases~\ref{item1_1}, \ref{item1_2}, and \ref{item1_3}. In each case, we reformulate the dual of the following inner maximization problem
\begin{align}
\label{primal_1}
\max_{p\in\mathcal{P}}h(x)+\sum_{i=1}^{N}p_{i}f_{i}(x)=\sum_{i=1}^{N}p_{i}(f_{i}(x)+h(x)).
\end{align}
Observe that by assumption of each case, we have that the problem \eqref{primal_1} is a convex problem that satisfy the Slater conditions. Therefore, the strong duality hold.

Case~\ref{item1_1}: In this case $\mathcal{P}=\Delta_{N}$ and the Lagrangian function is
\begin{align*}
L(p,\lambda)&=\sum_{i=1}^{N}p_{i}(f_{i}(x)+h(x))+\lambda(1-\sum_{i=1}^{N}p_{i})\\
&=\lambda+\sum_{i=1}^{N}p_{i}(f_{i}(x)+h(x)-\lambda).
\end{align*}
Then, \eqref{primal_1} is equivalent to the following dual problem
\begin{align}
\min_{\lambda\in\R}\max_{p\geq 0}\left\{\lambda+\sum_{i=1}^{N}p_{i}(f_{i}(x)+h(x)-\lambda)\right\}&\Leftrightarrow \min_{\lambda\in\R}\left\{\lambda+\max_{p\geq 0}\sum_{i=1}^{N}p_{i}(f_{i}(x)+h(x)-\lambda)\right\}\nonumber\\
&\Leftrightarrow \min_{\lambda\in\R} \lambda\nonumber\\
&\quad\text{s.t. } f_{i}(x)+h(x)-\lambda\leq 0,\quad\forall i\in\{1,\ldots,N\}.\nonumber
\end{align}
Thus, problem~\eqref{prob_principal} is equivalent to
\begin{align}
&\min_{x\in \H,\,\lambda\in\R}\lambda\label{dual_prob_1}\\
&\text{s.t. } f_{i}(x)+h(x)-\lambda\leq 0,\quad\forall i\in\{1,\ldots,N\}\nonumber\\
&\quad\,\, x\in Q\nonumber
\end{align}
Case~\ref{item1_2}: In this case $\mathcal{P}=\{p\in\Delta_{N}\,:\,p\leq q\}$ and hence the Lagrangian function is
\begin{align*}
L(p,\lambda,\mu)&=\sum_{i=1}^{N}p_{i}(f_{i}(x)+h(x))+\lambda(1-\sum_{i=1}^{N}p_{i})+\sum_{i=1}^{N}\mu_{i}(q_{i}-p_{i})\\
&=\lambda+\mu^{\top}q+\sum_{i=1}^{N}p_{i}(f_{i}(x)+h(x)-\mu_{i}-\lambda).
\end{align*}
Then, from strong duality, \eqref{primal_1} is equivalent to the following dual problem
\begin{align}
&\min_{\substack{\lambda\in\R\\\mu\in\R_{+}^{N}}}\max_{p\geq 0}\left\{\lambda+\mu^{\top}q+\sum_{i=1}^{N}p_{i}(f_{i}(x)+h(x)-\mu_{i}-\lambda)\right\}\nonumber\\
&\Leftrightarrow \min_{\substack{\lambda\in\R\\\mu\in\R_{+}^{N}}} \lambda+\mu^{\top}q\nonumber\\
&\quad\text{s.t. } f_{i}(x)+h(x)-\mu_{i}-\lambda\leq 0,\quad\forall i\in\{1,\ldots,N\}.\nonumber
\end{align}
Thus, problem~\eqref{prob_principal} is equivalent to
\begin{align}
&\min_{\substack{x\in \H,\,\lambda\in\R\\\mu\in\R_{+}^{N}}}\lambda+\mu^{\top}q\label{dual_prob_2}\\
&\text{s.t. } f_{i}(x)+h(x)-\mu_{i}-\lambda\leq 0,\quad\forall i\in\{1,\ldots,N\}\nonumber\\
&\quad\,\, x\in Q\nonumber
\end{align}
Case~\ref{item1_3}: Here $\mathcal{P}=\{p\in\Delta_{N}\,:\,\mu_{-}\leq \sum_{i=1}^{N}p_{i}\xi_{i}\leq\mu_{+}\}$ and hence the Lagrangian function is
\begin{align*}
L(p,\lambda,\beta,\gamma)&=\sum_{i=1}^{N}p_{i}(f_{i}(x)+h(x))+\lambda(1-\sum_{i=1}^{N}p_{i})+\beta(\mu_{+}-\sum_{i=1}^{N}p_{i}\xi_{i})+\gamma(-\mu_{-}+\sum_{i=1}^{N}p_{i}\xi_{i})\\
&=\lambda+\beta\mu_{+}-\gamma\mu_{-}+\sum_{i=1}^{N}p_{i}(f_{i}(x)+h(x)-\lambda+(\gamma-\beta)\xi_{i}).
\end{align*}
Therefore, from strong duality, \eqref{primal_1} is equivalent to the following dual problem
\begin{align}
&\min_{\substack{\lambda\in\R\\\beta\geq 0,\gamma\geq 0}}\max_{p\geq 0}\left\{\lambda+\beta\mu_{+}-\gamma\mu_{-}+\sum_{i=1}^{N}p_{i}(f_{i}(x)+h(x)-\lambda+(\gamma-\beta)\xi_{i})\right\}\nonumber\\
&\Leftrightarrow \min_{\substack{\lambda\in\R\\\beta\geq 0,\gamma\geq 0}} \lambda+\beta\mu_{+}-\gamma\mu_{-}\nonumber\\
&\quad\text{s.t. } f_{i}(x)+h(x)-\lambda+\gamma\xi_{i}-\beta\xi_{i}\leq 0,\quad\forall i\in\{1,\ldots,N\}.\nonumber
\end{align}
Thus, problem~\eqref{prob_principal} is equivalent to
\begin{align}
&\min_{\substack{x\in \H,\,\lambda\in\R\\\beta\geq 0,\gamma\geq 0}}\lambda+\beta\mu_{+}-\gamma\mu_{-}\label{dual_prob_3}\\
&\text{s.t. } f_{i}(x)+h(x)-\lambda+\gamma\xi_{i}-\beta\xi_{i}\leq 0,\quad\forall i\in\{1,\ldots,N\}\nonumber\\
 &\quad\,\, x\in Q\nonumber
\end{align}

 \section{Applications}
 \label{sec_app}
\subsection{Couette inverse problem}
 Consider a rheometer of coaxial cylinders with fluid. Let $\Omega\colon\R\rightarrow\R$ representing the angular velocity of the inner cylinder. The function $\Omega$ has the following form \cite{krieger1952direct}:
 \begin{align*}
    \Omega(\tau)=\dfrac{1}{2}\int_{\beta\tau}^{\tau}\dfrac{f(t)}{t}dt,
 \end{align*}
 where $\tau$ is the tangential shear stress on the cylindrical surface, $f\colon\R\rightarrow\R$ is the rheological curve of the fluid, and $\beta<1$. Given $\Omega\colon\R\rightarrow\R$, the Couette inverse problem consists in to find the function $f$. Suppose that we have a sample of size $r$ of values $(\Omega_{i},\tau_{i})_{i=1}^{r}$ representing measurements obtained from the rheometer. Assume that the function $f$ has the form $f(t)=\sum_{j=1}^{\ell}a_{j}e_{j}(t)$, where $a_{j}\in\R$ and $e_{j}$ is a function such that $t\mapsto\frac{e_{j}(t)}{t}$ is integrable for all $j\in\{1,\ldots,\ell\}$. Given $(\Omega_{i},\tau_{i})_{i=1}^{r}$, the problem is to find the constants $(a_{j})_{j=1}^{\ell}$ such that 
 \begin{align*}
\Omega_{i}=\sum_{j=1}^{\ell}a_{j}\cdot\dfrac{1}{2}\int_{\beta\tau_{i}}^{\tau_{i}}\dfrac{e_{j}(t)}{t}dt,\quad\text{for all }i\in\{1,\ldots,r\},
 \end{align*}
 which is equivalent to $\Omega=Ax$, where $\Omega=(\Omega_{i})_{i=1}^{r}\in\R^{r}$, $A\in\R^{r\times\ell}$ is the matrix defined by $A_{ij}=\dfrac{1}{2}\displaystyle\int_{\beta\tau_{i}}^{\tau_{i}}\frac{e_{j}(t)}{t}dt$, and $x=(a_{j})_{j=1}^{\ell}$. It is possible that the system $Ax=\Omega$ has no solutions or that it has several solutions. In these cases, a common approach to finding an approximate solution is to solve the following regularized least squares problem
 \begin{align}
 \label{lq_reg_1}
\min_{x\in\R^{\ell}} \|Ax-\Omega\|^{2}+\lambda R(x),
 \end{align} 
 where $R$ is the regularization function which represents a prior information on $x$ and $\lambda>0$ is the regularization parameter. In the case when $R(x)=\|Dx\|^{2}$, where $D\in\R^{p\times\ell}$ and assuming that $A^{\top}A+\lambda D^{\top}D$ is positive definite (for example, if $D=\Id$), the Fermat's rule implies that the solution to \eqref{lq_reg_1} is
 \begin{align*}
    \overline{x}=(A^{\top}A+\lambda D^{\top}D)^{-1}A^{\top}\Omega.
 \end{align*}
  Suppose now that we have $N$ samples of size $r$ of values $(\Omega^{1}_{i})_{i=1}^{r},\ldots,(\Omega^{N}_{i})_{i=1}^{r}$. Let $\xi\colon\overline{\Omega}\rightarrow\Xi$ the random vector defined on a measurable space $(\overline{\Omega},\mathcal{A})$ whose values are in the finite set $\Xi=\{\Omega^{1},\ldots,\Omega^{N}\}$ and let $\mathcal{P}\subset\Delta_{N}$ a nonempty closed convex subset of probability measures on the space $(\overline{\Omega},\mathcal{A})$ supported on $\Xi$. For every $\mathbb{P}\in\mathcal{P}$ and $k\in\{1,\ldots,N\}$, we denote $p_{k}=\mathbb{P}(\{\omega\in\overline{\Omega}\,:\,\xi(\omega)=\Omega^{k}\})$. Thus, we consider the following DRO problem
  \begin{align}
\label{dro_lq_reg_1}
\min_{x\in\R^{\ell}}\sup_{p\in\mathcal{P}}\sum_{k=1}^{N}p_{k}\|Ax-\Omega^{k}\|^{2}+\lambda\|Dx\|^{2}.
  \end{align}
  Note that, since $\mathcal{P}\subset\Delta_{N}$, problem \eqref{dro_lq_reg_1} is equivalent to
  \begin{align*}
\min_{x\in\R^{\ell}}\|Ax\|^{2}+\lambda\|Dx\|^{2}+\sup_{p\in\mathcal{P}}\sum_{k=1}^{N}p_{k}(-2x^{\top}A^{\top}\Omega^{k}+\|\Omega^{k}\|^{2}),
  \end{align*}
  which is equivalent to problem~\eqref{prob_item4} when $\Q=\R^{\ell}$, $h(x)=\|Ax\|^{2}+\lambda\|Dx\|^{2}$ (which has  Lipschitz gradient), $a_{k}=-2A^{\top}\Omega^{k}$, and $\xi_{k}=\|\Omega^{k}\|^{2}$.
  
  Note that if $D=\Id$, then the problem~\eqref{dro_lq_reg_1} reduces to
  \begin{align}
\label{dro_lq_reg_2}
\min_{x\in\R^{\ell}}\dfrac{1}{2\mu}\|x\|^{2}+ \sup_{p\in\mathcal{P}}\sum_{k=1}^{N}p_{k}(\langle x,Qx\rangle+\langle b_{k},x\rangle+c_{k}),
  \end{align}
  where $Q=A^{\top}A$, $b_{k}=-2A^{\top}\Omega^{k}$, $c_{k}=\|\Omega^{k}\|^{2}$, and $\mu=\frac{1}{2\lambda}$. The problem~\eqref{dro_lq_reg_2} is equivalent to find $P_{\mu}\widetilde{f}(0)$, where $\widetilde{f}$ is defined in \eqref{sup_fun}. Thus, \eqref{dro_lq_reg_2} can be solved using the algorithm proposed in Proposition~\ref{calc_prox_sup_1}.
  \subsection{Denoising}
  Suppose that we have a noisy measurement of a signal $x\in\R^{n}$:
\begin{align*}
b=x+w,
\end{align*}
where $x$ is an unknown signal, $w$ is an unknown noise vector, and $b$ is the known measurement
vector. The denoising problem is the following: Given $b$, find a good estimate of $x$. The regularized least squares problem associated with this problem is
\begin{align}
\label{den_reg_1}
\min_{x\in\R^{n}} \|x-b\|^{2}+\lambda R(x),
\end{align}
where $R(x)$ is a regularization term which represents some a priori information on the signal and $\lambda>0$ is a given regularization parameter. Typically, it is considered a regularization function of the form
\begin{align*}
R(x)=\sum_{i=1}^{n-1}(x_{i}-x_{i+1})^{2}.
\end{align*}
Note that $R(x)=\|Lx\|^{2}$, where $L\in\R^{(n-1)\times n}$ is given by
\begin{align*}
L=\begin{pmatrix}
    1 & -1 & 0 & 0 & \cdots & 0 & 0\\
    0 &  1 & -1 & 0 &\cdots & 0 & 0\\
    0 & 0 & 1 & -1 &\cdots & 0 & 0\\
    \vdots & \vdots & \vdots &\vdots & & \vdots &\vdots\\
    0 & 0 & 0 & 0 & \cdots & 1 & -1
\end{pmatrix}.
\end{align*}
In this case, by the Fermat's rule, we have that the solution to \eqref{den_reg_1} is given by
\begin{align*}
\overline{x}=(\Id+\lambda L^{\top}L)^{-1}b.
\end{align*}
Suppose that we have $N$ noisy measurement $b^{1},\ldots,b^{N}$:
\begin{align*}
b^{i}=x^{i}+w^{i},
\end{align*}
where, for all $i\in\{1,\ldots,N\}$, $x^{i}\in\R^{n}$ is an unknown signal and $w^{i}$ is an unknown noise vector. The objective is to find a good estimate of $x^{i}$ for all $i\in\{1,\ldots,N\}$ in order to minimize the largest norm of the noise vectors. Thus, we consider the following robust optimization problem
\begin{align}
\label{dro_quad_sep}
\min_{\mathbf{x}=(x^{1},\ldots,x^{N})\in\R^{nN}}\left\{\lambda\sum_{j=1}^{N}\|L_{j}x^{j}\|^{2}+\max_{1\leq i\leq N}\|x^{i}-b^{i}\|^{2}\right\},
\end{align}
where $\|L_{j}x^{j}\|^{2}$ represents the regularization term of the variable $x^{j}$. The problem~\eqref{dro_quad_sep} is particular case of problem~\eqref{prob_item3} when $V=\Q_{i}=\H=\R^{n}$, $H(\mathbf{x})=\lambda\sum_{j=1}^{N}\|L_{j}x^{j}\|^{2}$, and $\xi_{i}=b^{i}$. Note that $\nabla H(\mathbf{x})=2\lambda(L_{1}^{\top}L_{1}x^{1},\ldots,L_{N}^{\top}L_{N}x^{N})$. Then, for every $\mathbf{x}$ and $\mathbf{y}$ in $\R^{nN}$, we have
\begin{align*}
\|\nabla H(\mathbf{x})-\nabla H(\mathbf{y})\|^{2}&=\sum_{j=1}
^{N}\|2\lambda L_{j}^{\top}L_{j}(x^{j}-y^{j})\|^{2}\\
&\leq (2\lambda)^{2}\max_{1\leq j\leq N}\|L_{j}^{\top}L_{j}\|^{2}\|\mathbf{x}-\mathbf{y}\|^{2}.
\end{align*}
Hence $\nabla H$ is $\beta^{-1}$-Lipschitz with $\beta^{-1}=2\lambda\displaystyle\max_{1\leq j\leq N}\|L_{j}^{\top}L_{j}\|$.
\section{Numerical experiments}
\label{sec_num_exp}
Consider the problem~\eqref{prob_principal} when $\H=\R^{n}$, $f_{i}(x)=\langle a_{i},x\rangle+\xi_{i}$ for all $i\in\{1,\ldots,N\}$, and $\Q=\{x\in\R^{n}\,:\,Ax=b\}$, where $a_{i}\in\R^{n}\backslash\{0\}$, $\xi_{i}\in\R$, $A\in\R^{m\times n}$ satisfies $\ker A^{\top}=\{0\}$, and $b\in\ran(A)$. We consider two cases for the function $h$. First, we consider $h(x)=\frac{1}{2}x^{\top}Mx$, where $M\in\R^{n\times n}$ is symmetric positive definite. In this case, the problem~\eqref{prob_principal} is
\begin{align}
\label{prob_num_exp_1}
\min_{x\in \Q}\left\{\frac{1}{2}x^{\top}Mx+\sup_{p\in\mathcal{P}}\sum_{i=1}^{N}p_{i}(\langle a_{i},x\rangle+\xi_{i})\right\}.
\end{align}
Second, we consider $h(x)=c^{\top}x$, where $c\in\R^{n}$. In this case, problem~\eqref{prob_principal} is
\begin{align}
\label{prob_num_exp_3}
\min_{x\in\Q}\left\{c^{\top}x+\sup_{p\in\mathcal{P}}\sum_{i=1}^{N}p_{i}(\langle a_{i},x\rangle+\xi_{i})\right\}.
\end{align}
We consider two ambiguity sets $\mathcal{P}$. First, we consider $\mathcal{P}=\Delta_{N}$.
Second, we consider $\mathcal{P}=\widetilde{\mathcal{P}}:=\{p\in\Delta_{N}\,:\,\mu_{-}\leq\sum_{i=1}^{N}p_{i}\xi_{i}\leq\mu_{+}\}$, where $\xi_{i}\in\R$, $\mu_{-}\in\R$, and $\mu_{+}\in\R$ satisfies the conditions in \ref{item1_3}.

Note that, since $M$ is positive definite, then $x\mapsto \frac{1}{2}x^{\top}Mx$ is coercive and strictly convex. Thus, by \cite[Proposition~11.15(i)]{MR3616647}, the problem \eqref{prob_num_exp_1} have an unique solution. On the other hand, we assume that problem~\eqref{prob_num_exp_3} has solutions.

For solving problems~\eqref{prob_num_exp_1} and \eqref{prob_num_exp_3} with $\mathcal{P}=\Delta_{N}$, we use the algorithms \eqref{alg_pi_2} (with $V=\mathcal{D}$, $\Q_{i}=\Q$, and $H(\mathbf{x})=h(x_{1})$), \eqref{alg_aragon_1}, \eqref{alg_fb_sev_1}, and \eqref{alg_dy_1}, while that for solving problems~\eqref{prob_num_exp_1} and \eqref{prob_num_exp_3} with $\mathcal{P}=\widetilde{\mathcal{P}}$, we use the algorithms \eqref{alg_pi_2}, \eqref{alg_aragon_2}, \eqref{alg_fb_sev_2}, and \eqref{alg_dy_2}. Note that in the context of problem~\eqref{prob_num_exp_3}, the equivalent formulations \eqref{dual_prob_1} and \eqref{dual_prob_3} reduce to a linear program, which can be solved using the function \texttt{linprog} of MATLAB. Thus, also we use the equivalent formulation~\eqref{dual_prob_1} for solving \eqref{prob_num_exp_3} when $\mathcal{P}=\Delta_{N}$ and we use the formulation~\eqref{dual_prob_3} for solving \eqref{prob_num_exp_3} when $\mathcal{P}=\widetilde{\mathcal{P}}$. For each method, we obtain the average execution time (in seconds) and the average number of iterations from $20$ random instances for the matrices $A$ and $M$, the vectors $\{a_{i}\}$, $b$, and $c$, and the scalars $\{\xi_{i}\}$, $\mu_{-}$, and $\mu_{+}$ (we consider $\mu_{-}\in\left[0,1/2\right]$ and $\mu_{+}\in\left[1/2,1\right]$). We
measure the efficiency for different values of $N$, $n$, and $m$. We choose $n=m$. We label the algorithm in \eqref{alg_pi_2} as \textit{prox max}, algorithms in \eqref{alg_aragon_1} and \eqref{alg_aragon_2} as \textit{distributed FB}, algorithms in \eqref{alg_fb_sev_1} and \eqref{alg_fb_sev_2}
as \textit{FB with subspaces}, and algorithms in \eqref{alg_dy_1} and \eqref{alg_dy_2} as \textit{Davis-Yin}. In addition, we label the method proposed in Section~\ref{dual_method} for solving \eqref{prob_num_exp_3} as \textit{dual method}. We choose to stop
every algorithm when the norm of the difference between two consecutive iterations
is less than $10^{-5}$ or the number of iterations exceeds $30000$. The results are the following.


\begin{table}[H]
\caption{Average execution time (number of iterations) with relative error tolerance $e=10^{-5}$} for solving problem~\eqref{prob_num_exp_1} with $\mathcal{P}=\Delta_{N}$.
\centering
\begin{tabular}{|c|c|c|c|c|}
\hline
$(n,m,N)$ & prox max & distributed FB & FB with subspaces & Davis-Yin\\ \hline
 $(100,100,10)$&  2.224 (1137) &  4.723 (3806) & 7.892 (5880) &  7.497 (5530) \\ \hline
 $(100,100,50)$&  3.163 (580) & 17.317 (12849) & 6.260 (3493) & 5.377 (3314) \\ \hline
 $(100,100,100)$ &  11.728 (1160) & 67.524 (23416) & 6.056 (2018) & 6.828 (2086) \\ \hline
 $(200,200,50)$& 9.962 (580) & 19.387 (12835) & 2.231 (498) & 1.539 (937) \\ \hline
 $(200,200,100)$& 41.647 (1160) & 100.361 (23404) & 7.065 (725) & 8.887 (1722) \\ \hline
\end{tabular}
\label{tab_1_1}
\end{table}

\begin{table}[H]
\caption{Average execution time (number of iterations) with relative error tolerance $e=10^{-5}$} for solving problem~\eqref{prob_num_exp_1} with $\mathcal{P}=\widetilde{\mathcal{P}}$.
\centering
\begin{tabular}{|c|c|c|c|c|}
\hline
$(n,m,N)$ & prox max & distributed FB & FB with subspaces & Davis-Yin\\ \hline
 $(100,100,10)$&  2.277 (1160) & 5.417 (4057) & 7.043 (4799) &  6.242 (4304) \\ \hline
 $(100,100,50)$&  4.663 (580) & 18.313 (13065) & 6.312 (3055) & 4.666 (2899) \\ \hline
 $(100,100,100)$ & 24.465 (1160) & 67.241 (23624) & 5.823 (1915) & 6.311 (2012) \\ \hline
 $(200,200,50)$& 10.312 (580) & 20.348 (13050) & 2.505 (542) & 1.591 (948) \\ \hline
 $(200,200,100)$& 46.704 (1160) & 86.954 (23613) & 8.723 (868) & 8.092 (1737) \\ \hline
\end{tabular}
\label{tab_1_3}
\end{table}

\begin{table}[H]
\caption{Average execution time (number of iterations) with relative error tolerance $e=10^{-5}$} for solving problem~\eqref{prob_num_exp_3} with $\mathcal{P}=\Delta_{N}$.
\centering
\begin{tabular}{|c|c|c|c|c|c|}
\hline
$(n,m,N)$ & prox max & distributed FB & FB with subsp. & Davis-Yin & Dual method\\ \hline
 $(100,100,10)$&   0.028 (11) &  1.034 (813) &  0.403 (277) &   2.661 (1945) & 0.057  \\ \hline
 $(100,100,50)$&   0.065 (11) & 13.310 (8586) & 1.923 (1051) & 13.107 (7515) & 0.064 \\ \hline
 $(100,100,100)$ & 0.113 (10) & 7.387 (2317) & 5.189 (1943) & 22.904 (7108) & 0.059 \\ \hline
 $(200,200,50)$& 0.164 (10) & 1.920 (1256) & 1.676 (1112) & 8.792 (5632) & 0.093 \\ \hline
 $(200,200,100)$& 0.345 (11) & 9.393 (2664) & 7.844 (2022) & 27.166 (6459) & 0.105 \\ \hline
\end{tabular}
\label{tab_2_1}
\end{table}

\begin{table}[H]
\caption{Average execution time (number of iterations) with relative error tolerance $e=10^{-5}$} for solving problem~\eqref{prob_num_exp_3} with $\mathcal{P}=\widetilde{\mathcal{P}}$.
\centering
\begin{tabular}{|c|c|c|c|c|c|}
\hline
$(n,m,N)$ & prox max & distributed FB & FB with subsp. & Davis-Yin & Dual method\\ \hline
 $(100,100,10)$& 0.040 (11) & 1.128 (864) & 0.463 (301) &  2.907 (2060) & 0.058 \\ \hline
 $(100,100,50)$&  0.204 (11) & 10.441 (7558) & 1.914 (1072) & 11.944 (6647) & 0.059\\ \hline
 $(100,100,100)$ & 1.166 (10) & 8.750 (2993) & 5.130 (1965) & 17.486 (5607) & 0.061\\ \hline
 $(200,200,50)$& 0.256 (10) & 3.659 (2328) & 1.702 (1140) & 6.885 (4257) & 0.092\\ \hline
 $(200,200,100)$& 0.850 (10) & 13.278 (3726) & 7.992 (2041) & 26.853 (5857) & 0.107 \\ \hline
\end{tabular}
\label{tab_2_3}
\end{table}

With respect to the problem \eqref{prob_num_exp_1}, we observe that in the cases $(n,m,N)=(100,100,10)$ and $(n,m,N)=(100,100,50)$, the most efficient algorithm is the proximal algorithm, where in the case of problem~\eqref{prob_num_exp_1} with $\mathcal{P}=\widetilde{\mathcal{P}}$ and $(n,m,N)=(100,100,50)$ the Davis-Yin's method has a similar performance with the proximal method. Now, for the problem~\eqref{prob_num_exp_1} with $\mathcal{P}=\Delta_{N}$, the forward-backward method with subspaces is the most efficient in the cases $(n,m,N)=(100,100,100)$ and $(n,m,N)=(200,200,100)$, while that in the case $(n,m,N)=(200,200,50)$ the fastest method is the Davis-Yin's formulation. On the other hand, for the problem~\eqref{prob_num_exp_1} with $\mathcal{P}=\widetilde{\mathcal{P}}$, in the case $(n,m,N)=(100,100,100)$ the forward-backward algorithm with subspaces is the most efficient, while that in the case $(n,m)=(200,200)$ the Davis-Yin's method has the best performance.

With respect to the problem~\eqref{prob_num_exp_3}, we note that in the case $(n,m,N)=(100,100,10)$ the proximal algorithm is the most efficient for both ambiguity sets, while that in the other cases the dual method is the most efficient. Note that in the case $(n,m,N)=(100,100,50)$ with $\mathcal{P}=\Delta_{N}$, the proximal algorithm and the dual method have similar performance. In addition, the forward backward algorithm with subspaces also is an efficient alternative for solving problem~\eqref{prob_num_exp_3}.
\section{Conclusions}
\label{sec_conc}
In this paper, we provide different splitting algorithms for solving the discrete version of the distributionally robust optimization problem. This problem includes a supremum function in the objective function. The first method is based on calculating the proximity operator of the supremum function. In some cases, we propose an algorithm that converges to the proximity operator, while in a particular case, we provide a closed form for the proximity operator. On the other hand, under qualifications conditions, we prove that the problem is equivalent to solving a monotone inclusion that involves the sum of finitely many monotone operators, and we compute the resolvent of the monotone operators involved in the inclusion. The second method uses an algorithm specialized for that type of inclusion, which is proposed in \cite{aragon2023distributed}. Additionally, we reformulate the inclusion as one that involves two monotone operators and the normal cone to a vector subspace. For solving this reformulation, we use the algorithm proposed in \cite{briceno2015forward} (third method) and the algorithm proposed in \cite{davis2017three} (fourth method).

The proposed algorithms can be applied to solve the Couette inverse problem with uncertainty and the denoising problem with uncertainty. In addition, we prove the efficiency of the algorithms in two particular problems. The first numerical experiment shows that the algorithm which computes the proximity of the supremum function (proximal algorithm) is the most efficient when the dimension of the problem and the size of the uncertainty set are small, whereas the third and fourth methods are more efficient when the dimension of the problem is larger. On the other hand, the second numerical experiment shows that the proximal algorithm is more efficient when the uncertainty set is small, while the method proposed in the literature performs better in other cases. 

\end{document}